\documentclass[12pt,a4paper]{article}
\usepackage{amsfonts,amssymb,amsmath}
\usepackage{theorem}
\input diagrams

\textwidth16.5cm \textheight23cm \topmargin-1.2cm
\oddsidemargin-0.2cm
\parskip 0.3ex plus 0.1ex minus 0.1ex
\parindent0ex
\sloppy

{\theorembodyfont{\rm}
  \newtheorem{defi}{Definition}[section]
  
  \newtheorem{exa}[defi]{Example}
  \newtheorem{exas}[defi]{Examples}
}
  \newtheorem{lem}[defi]{Lemma}
  \newtheorem{prop}[defi]{Proposition}
  \newtheorem{thm}[defi]{Theorem}
  \newtheorem{cor}[defi]{Corollary}

\newcommand{\para}{\par\vspace{1ex plus0.2ex minus0.2ex}
\refstepcounter{defi}\textbf{\arabic{section}.\arabic{defi}}\, }



\newcommand{\PP}{{\mathbb P}}

\newcommand{\ZZ}{{\mathbb Z}}

\newcommand{\cD}{{\mathcal D}}

\newcommand{\cS}{{\mathcal S}}

\newcommand{\T}{{\mathrm{T}}}

\newcommand{\id}{{\mathrm{id}}}
\newcommand{\dis}
                 {{\mathrel{\scriptstyle{\triangle}}}}

\newcommand{\eps}{{\varepsilon}}

\newcommand{\GL}{{\mathrm{GL}}}

\newcommand{\E}{{\mathrm{E}}}
\newcommand{\PE}{{\mathrm{PE}}}

\newcommand{\dist}{{\mathrm{dist}}}
\newcommand{\diag}{{\mathrm{diag}}}

\let\phi=\varphi

\let\theta=\vartheta


\newcommand{\DelimArray}[4]{\left#1\begin{array}{*{#3}{c}}#4\end{array}\right#2}
\newcommand{\SDelimArray}[4]{\hbox{\scriptsize\arraycolsep=.5\arraycolsep
  $\left#1\!\!\begin{array}{*{#3}{c}}#4\end{array}\!\!\right#2$}}

\newcommand{\Mat}{\DelimArray()}
\newcommand{\SMat}{\SDelimArray()}

\newenvironment{proof}
    {\begin{trivlist} \item {\sl Proof:}} 
    {{}\hfill $\square$ \end{trivlist}} 

\newcommand{\tbis}[1]{t_1,t_2,\ldots,t_{#1}}
\newcommand{\tinf}{t_1,t_2,\ldots}
\newcommand{\xbis}[1]{x_1,x_2,\ldots,x_{#1}}
\newcommand{\xinf}{x_1,x_2,\ldots}
\newcommand{\ele}[3]{e_{#1}^{#2}(#3)}
\newcommand{\tele}[3]{\tilde e_{#1}^{#2}(#3)}
\newcommand{\el}[2]{e_{#1}^{#2}}
\newcommand{\tel}[2]{\tilde e_{#1}^{#2}}
\newcommand{\ZX}{\ZZ\langle X\rangle}
\newcommand{\oa}{{\overline\alpha}}

\begin{document}
\title{Jordan homomorphisms and harmonic mappings}
\author{Andrea Blunck \and Hans Havlicek}
\date{}


\maketitle

\begin{abstract}
We show that each Jordan homomorphism $R\to R'$ of rings gives rise
to a harmonic mapping of one connected component of the projective
line over $R$ into the projective line over $R'$. If there is more
than one connected component then this mapping can be extended in
various ways to a harmonic mapping which is defined on the entire
projective line over $R$.
\par
\emph{Mathematics Subject Classification} (2000): 51C05, 51B05,
17C50.
\end{abstract}

\section{Introduction}\label{se:intro}
 \para
The problem to determine all \emph{harmonic mappings} (see
\ref{para:harmonic}) for \emph{projective lines over rings} (see
\ref{para:proj_gerade}) goes back to \textsc{K.~von~Staudt}
(1798--1867), who treated harmonic bijections between real projective
lines; cf.\ \cite[p.~57--58]{kar+k-88} for a survey and historical
remarks. The results which have been obtained so far show that among
the relevant algebraic mappings are -- apart from projective
transformations -- not only \emph{homomorphisms} but also
\emph{Jordan homomorphisms} of rings (see \ref{para:jordan}). If a
ring $R$ contains a subfield $K$ then the point set of the associated
\emph{chain geometry} (see \ref{para:kettengeom}) is the projective
line over $R$. The investigation of \emph{homomorphism of chain
geometries} (see \ref{para:homo}) has also lead to Jordan
homomorphisms of rings. There is a widespread literature on the
interplay of Jordan homomorphisms, harmonic mappings, and
homomorphisms of chain geometries. The interested reader should
consult \cite{bart-89}, \cite{bart+b-85}, \cite{bart+f-79},
\cite{benz+s+s-81}, \cite{blunck-92b}, \cite{blunck-94},
\cite{cirl+e-90}, \cite{ferr-81}, \cite{herz-87a}, \cite{herz-95},
\cite{hua-53}, \cite{lima+l-77b}, and \cite{lima+l-77a} for further
references and related results.
 \para
Suppose that we are given a Jordan homomorphism $\alpha:R\to R'$ of
rings. The ring $R$ can be embedded in the projective line $\PP(R)$
over $R$ via $t\mapsto R(t,1)$ and there is a similar embedding
$R'\to \PP(R')$. By virtue of these embeddings, $\alpha$ determines a
mapping
\begin{equation}\label{eq:vorspann_affin}
 R(t,1)\mapsto R'(t^\alpha,1')\mbox{ with } t\in R.
\end{equation}
There arises the question if (\ref{eq:vorspann_affin}) can be
extended to a mapping $\PP(R)\to\PP(R')$ in some ``natural way''. It
is fairly obvious that such an extension should take each point
$R(1,t)$ to $R'(1',t^\alpha)$. However, the projective line may also
contain points of the form $R(a,b)$, where neither $a$ nor $b$ are
invertible, whence they cannot be written as $R(1,t)$ or $R(t,1)$.
For each of those points an ``appropriate'' definition of the image
point is not immediate, since $R(a,b)\mapsto R(a^\alpha,b^\alpha)$
gives in general no well defined mapping.
 \par
An affirmative answer to the question above has been given by
\textsc{C.~Bartolone} \cite{bart-89} under the additional assumption
that $R$ is a ring of stable rank $2$. Among the rings with this
property are, e.g., local rings, matrix rings over fields, and
finite-dimensional algebras over commutative fields. In case of
stable rank $2$ each point of $\PP(R)$ can be written in the form
$R(t_1t_2-1,t_1)$ with parameters $t_1,t_2\in R$, and
\begin{equation}\label{eq:vorspann_sr2}
   R(t_1t_2-1,t_1) \mapsto R'(t_1^\alpha t_2^\alpha - 1',t_1^\alpha)
\end{equation}
is a well defined extension of (\ref{eq:vorspann_affin}).
 \par
In the present article there will be no restriction on the rings $R$
and $R'$. We shall show that the mapping (\ref{eq:vorspann_affin})
can be extended to a harmonic mapping $\oa:C\to \PP(R')$, where
$C\subseteq\PP(R)$ denotes the \emph{connected component} (in the
graph-theoretic sense; see \ref{para:proj_gerade}) of the point
$R(1,0)$. If $C\neq\PP(R)$ then $\oa$ can be extended in various ways
to a harmonic mapping $\PP(R)\to\PP(R')$.
\par
The definition of $\oa$ is rather involved: Each point of the
connected component $C$ can be described by some finite sequence
$(\tbis n)$ of parameters in $R$, where $n\geq 0$ is variable. Then
the Jordan homomorphism acts on these parameters, i.e., the sequence
$(t_1^\alpha,t_2^\alpha,\ldots,t_n^\alpha)$ determines the image
point; cf.\ formula (\ref{eq:vorspann_alpha}) below which is a
generalization of (\ref{eq:vorspann_sr2}). The number of parameters
which is needed in order to describe all points of $C$ may be
unbounded. Furthermore, a point of $C$ may admit many representations
in terms of parameters. So the problem is to show that we have a well
defined mapping. In \cite{bart-89} the situation is less complicated:
When $R$ is a ring of stable rank $2$, the projective line $\PP(R)$
coincides with the connected component $C$ and, as has been mentioned
above, each point of $\PP(R)$ can be described with just two
parameters.

\para
The paper is organized as follows: In Section \ref{se:elementary} we
discuss the \emph{elementary subgroup} $\E_2(R)$ of the general
linear group $\GL_2(R)$ over a ring $R$. Following \textsc{P.M.~Cohn}
\cite{cohn-66} we consider a family of matrices $E(t)$, $t\in R$,
with the property that each matrix in $\E_2(R)$ can be written as a
product $E(t_1)E(t_2)\cdots E(t_n)$ with $\tbis n\in R$ and $n\geq
0$. The entries of a matrix in $\E_2(R)$ can be expressed with the
help of an infinite family of polynomials in non-commuting
indeterminates. Next, in Section \ref{se:jordan}, we introduce the
concept of a \emph{polynomial with Jordan property} and present two
infinite families of such polynomials (Propositions \ref{prop:2} and
\ref{prop:3}). These polynomials are used in order to compare the
matrices $E(t_1)E(t_2)\cdots E(t_n)\in\E_2(R)$ and
$E(t_1^\alpha)E(t_2^\alpha)\cdots E(t_n^\alpha)\in\E_2(R')$. For
example, if the $(1,1)$-entry of the first matrix is a unit, then so
is the $(1,1)$-entry of the second matrix (Theorem \ref{thm:inv-0}).
 \par
Unfortunately, in general there is no well defined mapping sending
$E(t_1)E(t_2)\cdots E(t_n)$ to $E(t_1^\alpha)E(t_2^\alpha)\cdots
E(t_n^\alpha)$. But we can pass from $\E_2(R')$ to an appropriate
quotient group $\E_2(R'')/N_\alpha$; here $R''$ denotes the subring
of $R'$ which is generated by the image of the Jordan homomorphism
$\alpha$ and $N_\alpha$ is a normal subgroup of $\E_2(R'')$ which
depends on $\alpha$. In this way a well defined homomorphism of
groups $\E_2(R)\to \E_2(R'')/N_\alpha$ can be obtained (Theorem
\ref{thm:gruppen}). If $N_\alpha$ contains only the identity matrix
then we have a well defined mapping $\E_2(R)\to \E_2(R'')$ (Corollary
\ref{cor:neu}). This is the case when $\alpha$ belongs to a certain
class of Jordan homomorphisms, including homomorphisms and
antihomomorphisms. However, we shall see that $N_\alpha$ can also be
non-trivial (Examples \ref{exa:jordan}). We show that $N_\alpha$ is
in the centre of $\E_2(R'')$, whence the Jordan homomorphism $\alpha$
gives rise to a homomorphism
\begin{equation*}
  \alpha_\PE:\PE_2(R)\to \PE_2(R'')
\end{equation*}
of \emph{projective elementary groups} (see \ref{para:proj_gerade}),
which act on the connected component $C\subseteq\PP(R)$ and a
connected component of the subline $\PP(R'')\subseteq\PP(R')$,
respectively (Theorem \ref{thm:projektiv}). The mapping $\alpha_\PE$
is then the key to showing that a well defined mapping $\oa$ of the
points of $C$ is given by
\begin{equation}\label{eq:vorspann_alpha}
  R(1,0)\cdot E(t_1)E(t_2)\cdots E(t_n)
  \mapsto
  R'(1',0')\cdot E(t_1^\alpha)E(t_2^\alpha)\cdots E(t_n^\alpha)
\end{equation}
with $\tbis n\in R$ and $n\geq 0$. This $\oa$ extends
(\ref{eq:vorspann_affin}) and it turns $\alpha_\PE$ into a
homomorphism of transformation groups. We show some geometric
properties of the mapping $\oa$ and present several examples to
illustrate our results.
\par
In Section \ref{se:homomorph} we examine homomorphisms of chain
geometries. In particular, it will be established that the
isomorphisms of affine chain geometries discussed by
\textsc{A.~Herzer} in \cite[9.1]{herz-95} can be extended to
homomorphisms of chain geometries without any additional assumption
on the underlying rings (Theorem \ref{thm:ketten}). Thus our results
yield new examples of homomorphisms of chain geometries.

\section{The elementary subgroup $\E_2(R)$}\label{se:elementary}

\para
Throughout this paper we shall only consider associative rings with a
unit element, which is preserved by homomorphisms, inherited by
subrings, and acts unitally on modules.  The group of
\emph{invertible elements} and the \emph{centre} of a ring $R$ will
be denoted by $R^*$ and $Z(R)$, respectively. Also, we shall write
$\cS(R):=R^0\cup R^1\cup R^2\cup\ldots$ for the set of all
\emph{finite sequences} in $R$, including the empty sequence.
\para
We recall that the \emph{elementary subgroup\/} $\E_2(R)$ of the
general linear group $\GL_2(R)$ is generated by the set of all
matrices
\begin{equation}\label{def:E}
  E(t):=\Mat2{t&1\\-1&0} \mbox{ with } t\in R.
\end{equation}
Furthermore,
\begin{equation}\label{eq:E-inv}
  E(t)^{-1}=\Mat2{0&-1\\1&t}=E(0)\cdot E(-t)\cdot E(0),
\end{equation}
whence each element of $\E_2(R)$ can be written in the form
\begin{equation}\label{def:E(T)}
  E(t_1)\cdot E(t_2)\cdots E(t_n)=:E(\tbis n)=:E(T)
\end{equation}
where ${T}:=(\tbis n)\in\cS(R)$ denotes a sequence of $n\geq 0$
elements; cf.\ \cite[p.~368]{cohn-66}.
\par
It is easily seen that a ($2\times 2$)-matrix over $R$ commutes with
all matrices $E(t)$, $t\in R$, if and only if it has the form
$\diag(a,a)$ with $a\in Z(R)$. Hence the centre of $\E_2(R)$ is the
subgroup
\begin{equation}\label{def:zentrum}
  H:=\E_2(R)\cap \{\diag(a,a)\mid a\in Z(R)^*\}.
\end{equation}
 \para
In order to describe the entries of a matrix (\ref{def:E(T)}) we
consider an infinite sequence $X=(\xinf)$ of indeterminates over
$\ZZ$ and the free $\ZZ$-algebra $\ZX$. Its elements are polynomials
in the non-commuting indeterminates $\xinf$ with coefficients in
$\ZZ$.
\par
We shall frequently use the following \emph{universal property} of
$\ZX$ \cite[p.~6]{lam-91}: If $R$ is an arbitrary ring and $(\tinf)$
is an infinite sequence of elements in $R$, then there is a unique
homomorphism $\ZX\to R$ such that $x_i\mapsto t_i$ for each
$i\in\{1,2,\ldots\}$. The image of $f\in\ZX$ under this homomorphism
is written as $f(\tinf)$. Also, we have a homomorphism
$\E_2(\ZX)\to\E_2(R)$ by the action of $f\mapsto f(\tinf)$ on the
entries of a matrix. In addition, let $T=(\tbis n)\in\cS(R)$ be a
finite sequence which may be empty ($n<1$). Then we put
\begin{equation}\label{def:subst}
  f(T)=f(\tbis n):=f(\tbis n,0,0,\ldots).
\end{equation}
 \par
In order to avoid misinterpretations let us point out the following
particular case of (\ref{def:subst}): Assume that
$f=2x_2+x_3\in\ZX$, $T=(x_2,x_3)\in\cS(\ZX)$, and $V=(v_1,v_2,v_3)\in
\cS(R)$. Then $f(T)=f(x_2,x_3)$ denotes that polynomial which arises
from $f\in\ZX$ if $X$ is substituted by $(x_2,x_3,0,0,\ldots)$. As
$f(x_2,x_3)=2x_3\neq f$, we must not write ``$f=f(x_2,x_3)$'' in
order to stress that $f$ belongs to the $\ZZ$-subalgebra of $\ZX$
generated by $\{x_2,x_3\}$. Furthermore, $f(V)=2v_2+v_3$, but
$(f(T))(V)=(f(x_2,x_3))(V)=2x_3(V)=2v_3=f(v_2,v_3)$.
 \par
On the other hand, for each $g\in\ZX$ there is a sufficiently large
integer $n$ such that $g=g(\xbis n)$.
\para
Following \cite[p.~376]{cohn-66}, we define a sequence of elements in
$\ZX$ recursively by
  \begin{equation}\label{def:e}
  \renewcommand{\arraystretch}{1.3}
  \left.\begin{array}{c}
   \el {}{(-2)}  :=  -1,\;\;
   \el {}{(-1)}  :=  0,\;\;
   \el {}{(0)}   :=  1,\\
   \el {}{(n)}   := \el{}{(n-1)}\,x_n - \el{}{(n-2)},
  \end{array}\right\}
  \end{equation}
where $n\in\{1,2,\ldots\}$. It will turn out useful to have a short
notation for polynomials that arise from the ones given in
(\ref{def:e}) by a substitution (\ref{def:subst}) as follows: Given
$i,j\in\ZZ$ with $i\geq 1$ and $j\geq i-3$ we define
\begin{equation}\label{def:e-ij}
   \el i j := \ele  {} {(j-i+1)}{x_i,x_{i+1},\ldots,x_{j}},\;\;
   \tel i j := \ele  {} {(j-i+1)}{x_j,x_{j-1},\ldots,x_{i}}.
\end{equation}
So $j$ is an upper index rather than an exponent. In particular, we
have
\begin{equation}\label{eq:e(n)}
\el{1}{n}=\ele{}{(n)}{\xbis n}=  \el{}{(n)} \mbox{ for all }
n\in\{-2,-1,\ldots\}.
\end{equation}
Furthermore, each polynomial $\el ij$ can be written as a $\ZZ$-linear
combination of monomials $x_{h_1}x_{h_2}\cdots x_{h_m}$ with $i\leq
h_1<h_2<\cdots<h_m\leq j$ and $m$ ranging from $0$ to $j-i+1$.
Likewise $\tel ij$ is a $\ZZ$-linear combination of monomials
$x_{h_1}x_{h_2}\cdots x_{h_m}$ with $j\geq h_1>h_2>\cdots>h_m\geq i$
and $m$ ranging from $0$ to $j-i+1$. For example, $\el 2 2
=\ele{}{(1)}{x_2}= x_2$ and $\el 5 6 = \ele{}{(2)}{x_5,x_6}=x_5 x_6
-1$.
 \par
Many of the following calculations are based upon the identities
\begin{eqnarray}
 \label{eq:e-ij-rechts}
 \el{i}{j}  &=& \el{i}{j-1}x_j - \el{i}{j-2},
 \\
 \tel{i}{j} &=& \tel{i+1}{j}x_i - \tel{i+2}{j},
 \label{eq:te-ij-rechts}
\end{eqnarray}
which follow from (\ref{def:e}) and (\ref{def:e-ij}) whenever $1\geq
i\geq j$.
 \par
Next we describe certain elements of the group $\E_2(\ZX)$:

\begin{lem}\label{lem:1}
If $(\xbis n)\in \cS(\ZX)$ then
\begin{eqnarray}\label{eq:Ex}
  E(\xbis n) &=& \Mat2{ \el1{n}    &  \el1{n-1} \\
                       -\el2{n}    & -\el2{n-1}},
 \\
 \label{eq:Ex-inv}
 E(\xbis n)^{-1} &=& \Mat2{-\tel 2{n-1} & -\tel 1{n-1} \\
                        \tel 2{n  } &  \tel 1{n  } }.
\end{eqnarray}
\end{lem}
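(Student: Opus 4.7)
My plan is to prove (\ref{eq:Ex}) by a direct induction on $n$ and then to deduce (\ref{eq:Ex-inv}) from (\ref{eq:Ex}) via a short conjugation trick that avoids having to establish a separate left-hand continuant recursion for the reversed polynomials $\tel{i}{j}$.

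For (\ref{eq:Ex}) I proceed by induction on $n\geq 0$. The base case $n=0$ asks for the identity matrix: the conventions of (\ref{def:e}) give $\el{1}{0}=1$, $\el{1}{-1}=0$, $\el{2}{0}=0$, $\el{2}{-1}=-1$, so the right-hand side equals $I=E()$. For $n\geq 1$ I write $E(\xbis n)=E(\xbis{n-1})\cdot E(x_n)$, insert the inductive hypothesis into the first factor, and multiply out. Two of the resulting entries are immediately $\el{1}{n-1}$ and $-\el{2}{n-1}$; the other two simplify from $\el{1}{n-1}x_n-\el{1}{n-2}$ and $\el{2}{n-1}x_n-\el{2}{n-2}$ to $\el{1}{n}$ and $\el{2}{n}$ by the recursion (\ref{eq:e-ij-rechts}), with a direct check from (\ref{def:e}) handling the boundary index $i=2$, $j=1$.

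For (\ref{eq:Ex-inv}) set $P:=\Mat2{0&1\\1&0}$, so that $P^2=I$, and verify by a $2\times 2$ calculation (consistent with (\ref{eq:E-inv})) that $PE(t)P=E(t)^{-1}$ for every $t\in\ZX$. Telescoping the interior $P^2$ factors then gives
\begin{equation*}
   E(\xbis n)^{-1} \;=\; E(x_n)^{-1}\cdots E(x_1)^{-1} \;=\; P\,E(x_n,x_{n-1},\ldots,x_1)\,P .
\end{equation*}
Now apply the already-proved (\ref{eq:Ex}) to the reversed sequence: the four entries are continuants $e^{(k)}$ with $k\in\{n,n-1,n-2\}$ evaluated at tails of $(x_n,x_{n-1},\ldots,x_1)$, and reading (\ref{def:e-ij}) off from these shows the matrix equals
\begin{equation*}
   \Mat2{\tel{1}{n} & \tel{2}{n} \\ -\tel{1}{n-1} & -\tel{2}{n-1}} .
\end{equation*}
Left- and right-multiplication by $P$ then swaps the two rows and the two columns, producing exactly the right-hand side of (\ref{eq:Ex-inv}).

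The only non-routine step I anticipate is spotting the identity $PE(t)P=E(t)^{-1}$. Without it the natural attempt at (\ref{eq:Ex-inv}) would be a direct induction that multiplies $E(\xbis{n-1})^{-1}$ on the left by $E(x_n)^{-1}$, and this forces the separate derivation of a left-hand recursion $\tel{i}{j}=x_j\tel{i}{j-1}-\tel{i}{j-2}$. Such a recursion is true (it is the mirror of (\ref{def:e}) and follows by a sub-induction) but is not displayed in the excerpt; the conjugation trick sidesteps it entirely, reducing the whole lemma to (\ref{eq:Ex}) plus an index-chasing identification of the $\tel{i}{j}$.
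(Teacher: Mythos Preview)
Your argument is correct. For (\ref{eq:Ex}) you follow exactly the paper's induction via $E(\xbis n)=E(\xbis{n-1})E(x_n)$ and (\ref{eq:e-ij-rechts}). For (\ref{eq:Ex-inv}) you take a genuinely different route: the paper argues by a parallel induction, but using the decomposition $E(\xbis n)^{-1}=E(x_2,\ldots,x_n)^{-1}\cdot E(x_1)^{-1}$ together with (\ref{eq:E-inv}) and the recursion (\ref{eq:te-ij-rechts}) --- which \emph{is} available before the lemma. So your last paragraph slightly misreads the situation: the paper does not peel off $E(x_n)^{-1}$ on the left (which would indeed require the later identity (\ref{eq:te-ij-links})), but $E(x_1)^{-1}$ on the right, and (\ref{eq:te-ij-rechts}) suffices. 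Your conjugation trick $PE(t)P=E(t)^{-1}$ is nonetheless a nice shortcut: it reduces (\ref{eq:Ex-inv}) to (\ref{eq:Ex}) applied to the reversed sequence plus the bookkeeping identification of the substituted $e_i^j$ with the appropriate $\tilde e_i^j$, and it makes transparent why the inverse formula has the same shape as the direct one with rows and columns swapped. The paper's approach, by contrast, keeps the two halves methodologically symmetric and avoids introducing the auxiliary matrix $P$ (which lies in $\GL_2(\ZX)$ but not obviously in $\E_2(\ZX)$, though that is irrelevant to the computation).
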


\begin{proof}
Clearly, for $n=0$ we have $E()=I$, the identity in $\E_2(\ZX)$. Now
(\ref{eq:Ex}) follows easily by induction (\cite[p.~376]{cohn-66}),
since for $n\geq 1$ we infer from the induction hypothesis and
(\ref{eq:e-ij-rechts}) that
\begin{equation*}
  E(\xbis {n}) =
  \Mat2{ \el1{n-1}  &  \el1 {n-2} \\
        -\el2{n-1}  & -\el2 {n-2}}
        \cdot
        \Mat2{x_{n}&1\\-1&0}
        =
        \Mat2{ \el1{n}    &  \el1{n-1} \\
              -\el2{n}    & -\el2{n-1}}.
\end{equation*}
The proof of (\ref{eq:Ex-inv}) runs in a similar manner taking into
account the first part of equation (\ref{eq:E-inv}),
(\ref{eq:te-ij-rechts}), and $E(\xbis n)^{-1} =
E(x_2,x_3,\ldots,x_{n})^{-1} \cdot E(x_1)^{-1}$.
\end{proof}
   \para
We obtain the recursion $\el{1}{n} = x_1 \el2 n - \el3 n$ for
$n\in\{1,2,\ldots\}$ from the $(1,1)$-entry of the matrix equation $E(\xbis
n)=E(x_1)\cdot E(x_2,x_3,\ldots,x_n)$ together with  (\ref{eq:Ex}). This yields
\begin{eqnarray}
  \label{eq:e-ij-links}
  \el{i}{j} &=& x_i \el{i+1} j - \el{i+2} j,
  \\
  \label{eq:te-ij-links}
  \tel{i}{j} &=& x_j \tel{i} {j-1} - \tel{i} {j-2},
\end{eqnarray}
for $1\geq i\geq j$ as counterparts of (\ref{eq:e-ij-rechts}) and
(\ref{eq:te-ij-rechts}); cf.\ \cite[p.~376]{cohn-66}.

 \para
We now return to an arbitrary ring $R$. For each $T=(\tbis n)
\in\cS(R)$ there is the homomorphism $\ZX\to R:f\to f(T)$; see
(\ref{def:subst}). So we can transfer our calculations from $\ZX$
to $R$ and from $\E_2(\ZX)$ to $\E_2(R)$. For example,
(\ref{eq:e-ij-rechts}) yields
\begin{equation*}
  \ele{1}{n}T
  =
  \ele{1}{n-1}{T} t_n - \ele{1}{n-2}{T}
  = \ele{1}{n-1}{\tbis{n-1}} t_n - \ele{1}{n-2}{\tbis{n-2}}.
\end{equation*}
whereas (\ref{eq:e-ij-links}) gives
\begin{equation*}
  \ele{1}{n}T
  =
  t_1\ele{2}{n}{T} - \ele{3}{n}{T}
  = t_1\ele{1}{n-1}{t_2,t_3,\ldots,t_n} - \ele{1}{n-2}{t_3,t_4,\ldots,t_n}.
\end{equation*}
Observe that there are numerous ways to rewrite such identities,
since we may also add irrelevant ring elements. E.g.,
$\ele{1}{n}T=\ele{2}{n+1}{s,T}$ and $\ele{1}{n}T=\ele{1}{n}{T,v}$ for
all $s,v\in R$.
 \par
It has been pointed out in \cite[Lemma 1.5]{bart-89} that
$\ele{1}{3}{t_1,t_2,t_3}=t_1 t_2 t_3-t_3-t_1\in R^*$ implies
$\tele{1}{3}{t_1,t_2,t_3}=t_3 t_2 t_1-t_1-t_3\in R^*$ for all
$(t_1,t_2,t_3)\in R^3$. The following result is a generalization of
this, but we give a completely different proof using the group
$\E_2(R)$:

\begin{prop}\label{prop:1}
Let ${T}\in R^n$, $n\geq 0$. Then $\ele1nT\in R^*$ implies $\tele1nT
\in R^*$.
\end{prop}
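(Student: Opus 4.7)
The plan is to exploit Lemma~\ref{lem:1}: by the two formulas there, the hypothesised unit $\ele{1}{n}{T}$ appears as the $(1,1)$-entry of $E(T)\in\E_2(R)\subseteq\GL_2(R)$, while the element $\tele{1}{n}{T}$ whose invertibility we want is precisely the $(2,2)$-entry of $E(T)^{-1}$. Since $E(T)$ is already invertible in $\GL_2(R)$, the natural strategy is to build an explicit two-sided inverse for $\tele{1}{n}{T}$ out of the remaining entries of $E(T)$ and $E(T)^{-1}$, using both matrix products $E(T)E(T)^{-1}=I$ and $E(T)^{-1}E(T)=I$.

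Concretely, I would abbreviate $a:=\ele{1}{n}{T}$, $b:=\ele{1}{n-1}{T}$, $c:=\ele{2}{n}{T}$, $d:=\ele{2}{n-1}{T}$ and $\tilde a:=\tele{1}{n}{T}$, $\tilde b:=\tele{1}{n-1}{T}$, $\tilde c:=\tele{2}{n}{T}$. Reading off the $(1,2)$- and $(2,2)$-entries of $E(T)E(T)^{-1}=I$ gives
\begin{equation*}
 a\tilde b = b\tilde a, \qquad c\tilde b - d\tilde a = 1,
\end{equation*}
while the $(2,1)$- and $(2,2)$-entries of $E(T)^{-1}E(T)=I$ give
\begin{equation*}
 \tilde c a = \tilde a c, \qquad \tilde c b - \tilde a d = 1.
\end{equation*}

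Now, using $a\in R^*$, the first identity of each pair solves on the correct side for $\tilde b = a^{-1}b\tilde a$ and $\tilde c = \tilde a c a^{-1}$. Inserting these into the two inhomogeneous identities yields
\begin{equation*}
 (ca^{-1}b-d)\,\tilde a \;=\; 1 \;=\; \tilde a\,(ca^{-1}b-d),
\end{equation*}
so $u:=ca^{-1}b-d$ is a two-sided inverse of $\tilde a$ in $R$, and therefore $\tele{1}{n}{T}\in R^*$.

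The only delicate point is non-commutativity: $a^{-1}$ enters on exactly one side in each of the two substitutions, and obtaining both a left and a right inverse for $\tilde a$ is precisely what forces us to use both matrix products $E(T)E(T)^{-1}$ and $E(T)^{-1}E(T)$. The boundary cases $n\in\{0,1\}$ are automatic (both quantities coincide with $1$ and $t_1$ respectively), and the case $n=3$ recovers \cite[Lemma~1.5]{bart-89}; no machinery beyond Lemma~\ref{lem:1} is needed.
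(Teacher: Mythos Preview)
Your argument is correct. Reading off the four entries you name from $E(T)E(T)^{-1}=I$ and $E(T)^{-1}E(T)=I$ via Lemma~\ref{lem:1}, and then using the invertibility of $a=\ele{1}{n}{T}$ to eliminate $\tilde b$ and $\tilde c$, does produce the two-sided inverse $u=ca^{-1}b-d$ of $\tele{1}{n}{T}$ exactly as you claim; the care you take with the side on which $a^{-1}$ appears is precisely what is needed in the non-commutative setting.

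The paper proceeds differently. Rather than working inside the given $E(T)$, it pads the sequence to $(s,T,v)$, choosing $s$ and $v$ so that the off-diagonal entries of $E(s,T,v)$ vanish (this uses the recursions (\ref{eq:e-ij-links}) and (\ref{eq:e-ij-rechts}) together with $\ele{1}{n}{T}\in R^*$). One then has a \emph{diagonal} element of $\E_2(R)$, whose inverse is again diagonal; reading its $(1,1)$-entry from (\ref{eq:Ex-inv}) yields $-\tele{1}{n}{T}$, which is therefore a unit. Your route is more elementary and yields an explicit inverse $\big(\tele{1}{n}{T}\big)^{-1}=\ele{2}{n}{T}\,\ele{1}{n}{T}^{-1}\,\ele{1}{n-1}{T}-\ele{2}{n-1}{T}$ directly in terms of the entries of $E(T)$. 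The paper's padding trick, on the other hand, foreshadows the conjugation arguments used later (e.g.\ in the proof of Theorem~\ref{thm:gruppen}), so it fits the overall narrative a bit more organically.
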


\begin{proof}
This is obvious for $n=0$. So let $n\geq 1$. Since $\ele1{n}T\in
R^*$,
there are elements $s,\,v\in R$ such that
\begin{eqnarray*}
 0 &=& s\ele1{n}{T} -\ele2{n}{T}
    =  s\ele2{n+1}{s,T} -\ele3{n+1}{s,T}
    =  \ele1{n+1}{s,{T}}
    =  \ele1{n+1}{s,{T},v},
       \\
 0&=& \ele1{n}{T}v-\ele1{n-1}{T}
   =  \ele1{n}{T,v}v-\ele1{n-1}{T,v}
   =  \ele1{n+1}{T,v}
   =  \ele2{n+2}{s,T,v},
\end{eqnarray*}
where we used (\ref{eq:e-ij-links}) and (\ref{eq:e-ij-rechts}). We
read off from (\ref{eq:Ex}) and $\ele2{n+1}{s,T,v}=\ele1{n}{T}$ that
\begin{equation*}
  E(s,{T},v) =
  \diag\big(\ele1{n+2}{s,T,v},-\ele1{n}{T}\big).
\end{equation*}
The inverse of this matrix is diagonal, too, and by (\ref{eq:Ex-inv})
its $(1,1)$-entry equals $-\tele2{n+1}{s,T,v}=-\tele1{n}T$, which is
therefore a unit.
\end{proof}

\section{Jordan homomorphisms}\label{se:jordan}
\para\label{para:jordan}
Let $R$ and $R'$ be rings. We recall that a mapping $\alpha:R\to R'$
is called \emph{Jordan homomorphism} if
\begin{equation}\label{def:jordan}
  (a+b)^\alpha = a^\alpha + b^\alpha,\;\;
  1^\alpha = 1',\;\;
  (aba)^\alpha = a^\alpha b^\alpha a^\alpha\;\;\;
  \mbox{for all } a,b\in R.
\end{equation}
See, among others, \cite[p.~2]{jac-68} or \cite[p.~832]{herz-95}.
For such an $\alpha$ and an $a\in R^*$ the equation
$1'=(aa^{-2}a)^\alpha = a^\alpha (a^{-2})^\alpha a^\alpha$ shows that
$a^\alpha\in {R'}^*$. Also, $a^\alpha=(aa^{-1}a)^\alpha=a^\alpha
(a^{-1})^\alpha a^\alpha$
implies $(a^{-1})^\alpha = (a^\alpha)^{-1}$ \mbox for all $a\in R^*$.
We say that $\alpha$ is \emph{proper} if it is neither a homomorphism
nor an antihomomorphism.
 \par
In general a Jordan homomorphism is not multiplicative, but on
certain expressions, like $aba$ in (\ref{def:jordan}), it acts ``like
a ring homomorphism''. In order to generalize this we pass to the
free $\ZZ$-algebra $\ZX$. We say that $f\in\ZX$ is a \emph{polynomial
with Jordan property}, or a \emph{J-polynomial} for short, if
$f(T)^\alpha = f(T^\alpha)$ holds
for every Jordan
homomorphism $\alpha:R\to R'$ between arbitrary rings $R$ and $R'$  and every (finite or infinite) sequence
$T$ in $R$.

\begin{exas}\label{exa:j-poly}
Clearly, $1$, $x_i$, and $x_ix_jx_i$ are J-polynomials for all
$i,j\in\{1,2,\ldots\}$. The set of all J-polynomials forms a
$\ZZ$-submodule of $\ZX$. Furthermore, suppose that $G$ is a (finite or
infinite) sequence of J-polynomials and that $f$ is a J-polynomial.
Then it is easily seen that also $f(G)$ has the Jordan property. In
particular we obtain the J-polynomials
\begin{eqnarray*}
                 x_1^2 &=& x_1 1 x_1,\\
         x_1x_2+x_2x_1 &=& (x_1+x_2)^2-x_1^2-x_2^2,\\
   x_1x_2x_3+x_3x_2x_1 &=& (x_1+x_3)x_2(x_1+x_3)-x_1x_2x_1-x_3x_2x_3.
\end{eqnarray*}
\end{exas}

Next we give two infinite families of J-polynomials.

\begin{prop}\label{prop:2}
Let $n\in\{-1,0,\ldots\}$. Then $\el 1n \tel1{n-1}$ is a polynomial
with Jordan property.
\end{prop}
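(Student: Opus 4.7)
The plan is to proceed by induction on $n\ge -1$. The cases $n\in\{-1,0,1\}$ are immediate since $\el 1n\tel 1{n-1}$ reduces to $0$, $0$, or $x_1$ respectively, each a J-polynomial by Examples~\ref{exa:j-poly}. For the inductive step I would first derive the identity
\[
\el 1n\tel 1{n-1}\;=\;\el 1{n-1}\,x_n\,\tel 1{n-1}\;-\;\el 1{n-1}\tel 1{n-2}
\]
by combining the right-recursion $\el 1n=\el 1{n-1}x_n-\el 1{n-2}$ from (\ref{eq:e-ij-rechts}) with the auxiliary relation $\el 1{n-2}\tel 1{n-1}=\el 1{n-1}\tel 1{n-2}$, which falls out of the $(1,2)$-entry of $E(\xbis{n-1})E(\xbis{n-1})^{-1}=I$ via the explicit formulas of Lemma~\ref{lem:1}. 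The second summand is a J-polynomial by the inductive hypothesis, so everything reduces to showing that the sandwich $Q_n:=\el 1{n-1}\,x_n\,\tel 1{n-1}$ is a J-polynomial.

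Since $\tel 1{n-1}$ is the image of $\el 1{n-1}$ under the reversing anti-automorphism $\sigma$ of $\ZX$, the polynomial $Q_n$ is $\sigma$-invariant and $x_n$ is absent from both flanking factors. I would expand $\el 1{n-1}=\sum_M c_M\,x_M$ over strictly increasing index sequences $M\subseteq\{1,\ldots,n-1\}$ and correspondingly $\tel 1{n-1}=\sum_M c_M\,x_{M^r}$, then split $Q_n$ into diagonal summands $c_M^2\,x_M x_n x_{M^r}$ and off-diagonal pair sums $c_Mc_N(x_Mx_nx_{N^r}+x_Nx_nx_{M^r})$. The diagonal pieces are iterated palindromes $x_{i_1}\cdots x_{i_k}x_n x_{i_k}\cdots x_{i_1}$, plainly J-polynomials by repeatedly applying the rule that $y\,J\,y$ is a J-polynomial whenever $J$ is. For an off-diagonal pair with nonempty common prefix $P$, I would factor the bracket as $x_P\bigl(x_{M'}x_n x_{N'^r}+x_{N'}x_n x_{M'^r}\bigr)x_{P^r}$ with $M',N'$ sharing no leading index, and realise the inner sum as a Jordan triple product $abc+cba$ in which the middle factor absorbs $x_n$ together with any palindromic remainder.

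The main obstacle lies in off-diagonal pairs with empty common prefix and both sequences long: for instance in $Q_5$ the pair $(M,N)=((1,2,3,4),())$ contributes $x_1x_2x_3x_4x_5+x_5x_4x_3x_2x_1$, which for four or more variables lives in the $\sigma$-invariant subring of $\ZX$ but is not in the free special Jordan algebra as a single element. The saving observation is that such stubborn pairs never appear alone: the Euler interpretation of the monomials of $\el 1{n-1}$ as subsets $M\subseteq\{1,\ldots,n-1\}$ whose complement is a disjoint union of adjacent-index pairs (with sign equal to $(-1)^k$ where $k$ is the number of such pairs) forces the stubborn terms to cluster into compatible families whose combined contribution is a nested Jordan triple product. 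For example in $Q_5$, the pair $((1,2,3,4),())$ combines with $((3,4),(1,2))$ to give the J-polynomial $x_1x_2(x_3x_4x_5+x_5x_4x_3)+(x_3x_4x_5+x_5x_4x_3)x_2x_1$. Making this grouping systematic -- matching off-diagonal pairs according to the partitions of $M\cup N$ into adjacent-block remainders and showing that each such family assembles into iterated Jordan triple products -- is the real content of the argument and the step I would expect to require the most care.
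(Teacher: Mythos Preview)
Your reduction to the sandwich $Q_n=\el 1{n-1}\,x_n\,\tel 1{n-1}$ is valid; in fact the identity you derive is precisely the one the paper uses, in the reverse direction, to deduce Proposition~\ref{prop:3} from Proposition~\ref{prop:2}. The problem is the remaining step: proving directly that $Q_n$ is a J-polynomial via monomial expansion and ad hoc clustering. You yourself flag this as ``the real content of the argument'' and offer only the $n=5$ illustration. Making the grouping systematic for arbitrary $n$ would require a careful combinatorial matching on pairs of subsets of $\{1,\ldots,n-1\}$, and this is not carried out. The difficulty is genuine: a bare symmetric sum such as $x_1x_2\cdots x_k + x_k\cdots x_2x_1$ is not a J-polynomial once $k\geq 4$, so you really do need the cancellations, and nothing in your sketch explains why the sign and subset structure of $\el 1{n-1}$ always supplies compatible partners. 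As written, the proposal stops exactly where the work begins.

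The paper sidesteps this combinatorial morass entirely by expanding from the \emph{left} rather than the right. Using $\el 1 n = x_1\el 2 n - \el 3 n$ and $\tel 1{n-1}=\tel 2{n-1}x_1-\tel 3{n-1}$ one gets
\[
\el 1 n\,\tel 1{n-1}
= x_1\,(\el 2 n\tel 2{n-1})\,x_1 + \el 3 n\tel 3{n-1}
  -\bigl(x_1\el 2 n\tel 3{n-1}+\el 3 n\tel 2{n-1}x_1\bigr).
\]
Here each $\el i n\tel i{n-1}$ with $i\geq 2$ is a shift of $\el 1{n-i+1}\tel 1{n-i}$ and hence a J-polynomial by the induction hypothesis, so the first two summands are already settled. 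One then expands the bracketed cross term once more by peeling off $x_2$, obtaining a piece of the shape $x_1x_2\,J+J\,x_2x_1$ (with $J=\el 3 n\tel 3{n-1}$) plus a shorter cross term; the next step peels off $x_3$ and yields $x_1\,J\,x_3+x_3\,J\,x_1$; and so on until the residual cross term collapses to $\pm x_1$. Every peeled piece lands in one of the three basic Jordan templates $aba$, $ab+ba$, $abc+cba$ with a known J-polynomial in the middle slot, so no monomial-by-monomial pairing is ever needed. If you want to rescue your right-hand approach, the cleanest fix is to attack $Q_n$ by this same left-peeling trick rather than by brute expansion --- but at that point you are essentially reproducing the paper's argument.
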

\begin{proof}
We proceed by induction observing that $\el 1{-1} \tel1{-2}=0$, $\el
10 \tel1{-1}=0$, and $\el 1{1} \tel1{0}=x_1$ are J-polynomials.
Letting $n\geq 2$ we infer from the induction hypothesis and an
appropriate substitution that
\begin{equation}\label{eq:ind-vs}
 \el{i}{n}\tel{i}{n-1} =
  \ele{1}{n-i+1}{x_i,x_{i+1},\ldots,x_{n}  }
 \tele{1}{n-i}  {x_i,x_{i+1},\ldots,x_{n-1}}
\end{equation}
is a J-polynomial for $i\in\{2,3\}$. The keys for the following
calculations are formula (\ref{eq:e-ij-links}) and formula
(\ref{eq:te-ij-rechts}). So we get
\begin{eqnarray}
   \el1n\tel1{n-1}
   &=& (x_1\el2n - \el3n)(\tel2{n-1}x_1 - \tel3{n-1})
  \nonumber
  \\
  &=&
   \underbrace{
   x_1 (\el2n\tel2{n-1}) x_1 + \el3n\tel3{n-1}
              }_{=:f_1}
  -(x_1\el2n\tel3{n-1} + \el3n\tel2{n-1} x_1)
  \nonumber
\end{eqnarray}
where, by (\ref{eq:ind-vs}) and the examples given in
\ref{exa:j-poly}, $f_1$ is a J-polynomial. Similarly, if $n\geq 3$
then
\begin{equation*}
  x_1 \el2n\tel3{n-1} + \el3n\tel2{n-1} x_1
  =
   \underbrace{
   x_1 x_2 (\el3n\tel3{n-1}) + (\el3n\tel3{n-1}) x_2 x_1
              }_{=:f_2}
  -(x_1\el4n\tel3{n-1} + \el3n\tel4{n-1} x_1)
\end{equation*}
with a J-polynomial $f_2$. Also, for $n\geq 4$ we get
\begin{equation*}
  x_1\el4n\tel3{n-1} + \el3n\tel4{n-1}  x_1
  =    \underbrace{
    x_1 (\el4n\tel4{n-1})x_3 + x_3(\el4n\tel4{n-1}) x_1
                  }_{=:f_3}
  -(x_1\el4n\tel5{n-1} +\el5n\tel4{n-1}x_1).
\end{equation*}
with a J-polynomial $f_3$. Proceeding in this way we arrive either at
\begin{equation*}
f_{n-1} - (x_1 \el{n+1}{n}\tel{n}{n-1} + \el{n}n\tel{n+1}{n-1} x_1 )
=
f_{n-1} - (x_1\cdot 1\cdot 1 + x_n\cdot 0\cdot x_1),
\end{equation*}
when $n$ is odd, or at
\begin{equation*}
f_{n-1} - (x_1 \el{n}n \tel{n+1}{n-1} + \el{n+1}n \tel{n}{n-1} x_1)
=
f_{n-1} - (x_1\cdot x_n\cdot 0 + 1\cdot 1\cdot x_1),
\end{equation*}
when $n$ is even. But $x_1$ is a J-polynomial, whence the assertion
follows.
\end{proof}

\begin{prop}\label{prop:3}
Let $n\in\{-2,-1,\ldots\}$. Then $\el 1n \tel1n$ is a polynomial with
Jordan property.
\end{prop}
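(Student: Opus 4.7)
I propose to prove Proposition \ref{prop:3} by induction on $n$, paralleling the proof of Proposition \ref{prop:2}. The base cases $n\in\{-2,-1,0,1\}$ are immediate: $\el 1{-2}\tel 1{-2}=1$, $\el 1{-1}\tel 1{-1}=0$, $\el 10\tel 10=1$, and $\el 11\tel 11=x_1^2$ are all J-polynomials by Examples \ref{exa:j-poly}.

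For the induction step ($n\geq 2$), I would begin by writing $\el 1n\tel 1n=(x_1\el 2n-\el 3n)(\tel 2n x_1-\tel 3n)$ via (\ref{eq:e-ij-links}) and (\ref{eq:te-ij-rechts}), and rearranging to
\begin{equation*}
\el 1n\tel 1n \;=\; \bigl(x_1(\el 2n\tel 2n)x_1+\el 3n\tel 3n\bigr)\;-\;\bigl(x_1\el 2n\tel 3n+\el 3n\tel 2n x_1\bigr).
\end{equation*}
Call the first bracket $f_1$ and the second $R_1$. Since $\el 2n\tel 2n$ and $\el 3n\tel 3n$ are J-polynomials by the induction hypothesis of Proposition \ref{prop:3} applied after the variable shifts $x_i\mapsto x_{i+1}$ and $x_i\mapsto x_{i+2}$ respectively, and since $p\mapsto x_1 p x_1$ preserves the Jordan property (Examples \ref{exa:j-poly}), $f_1$ is a J-polynomial.

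To handle $R_1$, I would iterate. At stage $k\geq 1$ I intend to peel the variable $x_{k+1}$ out of the smaller-indexed $\el$-factor and the smaller-indexed $\tel$-factor of $R_k$ using (\ref{eq:e-ij-links}) and (\ref{eq:te-ij-rechts}); this produces $R_k=f_{k+1}-R_{k+1}$, where $R_{k+1}$ again has the symmetric shape $x_1\el an\tel bn+\el bn\tel an x_1$ with the index pair $(a,b)$ marching through $(2,3),(4,3),(4,5),(6,5),(6,7),\ldots$, and where $f_{k+1}$ has the shape $x_1 x_{k+1}p + p x_{k+1}x_1$ when $k$ is odd and $x_1 p x_{k+1}+x_{k+1} p x_1$ when $k$ is even, with $p=\el jn\tel jn$ for the appropriate $j\in\{k+1,k+2\}$. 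The polynomial $p$ is a J-polynomial by the induction hypothesis, and both shapes for $f_{k+1}$ are J-polynomials through the cubic identities $x_1 x_{k+1}p+p x_{k+1}x_1=(x_1+p)x_{k+1}(x_1+p)-x_1 x_{k+1}x_1-p x_{k+1}p$ and $x_1 p x_{k+1}+x_{k+1}p x_1=(x_1+x_{k+1})p(x_1+x_{k+1})-x_1 p x_1-x_{k+1}p x_{k+1}$, each of which expresses $f_{k+1}$ as a $\ZZ$-linear combination of polynomials of the form $y_1 y_2 y_1$ built from J-polynomials (Examples \ref{exa:j-poly}).

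The iteration terminates at $k=n-1$, because the maximum of the two indices in $R_k$ equals $k+2$; when $k+2=n+1$ one of $\el{n+1}n$, $\tel{n+1}n$ equals $1$ by (\ref{def:e}) while its partner is $x_n$, so that $R_{n-1}$ collapses to $x_1 x_n+x_n x_1$ irrespective of the parity of $n$. The final residue is a J-polynomial by Examples \ref{exa:j-poly}, and telescoping yields $\el 1n\tel 1n=f_1-f_2+f_3-\cdots\pm f_{n-1}\mp R_{n-1}$, a $\ZZ$-linear combination of J-polynomials, completing the induction. The main difficulty I anticipate is the bookkeeping that ensures each peeling step restores the symmetric form $x_1\el an\tel bn+\el bn\tel an x_1$; this dictates which side (the $\el$ or the $\tel$ factor) must be peeled at each stage and is precisely what makes the termination value $x_1 x_n+x_n x_1$ come out independent of the parity of $n$.
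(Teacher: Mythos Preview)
Your induction argument is correct and goes through as written: the recursion on the index pair $(a,b)$ works, each $f_{k+1}$ is a J-polynomial because $x_1x_2x_3+x_3x_2x_1$ is (Examples~\ref{exa:j-poly}) and you are substituting J-polynomials into it, and the termination at $R_{n-1}=x_1x_n+x_nx_1$ is clean. However, your route differs substantially from the paper's. The paper does \emph{not} redo the telescoping induction of Proposition~\ref{prop:2}; instead it reduces Proposition~\ref{prop:3} to Proposition~\ref{prop:2} in three lines. From the $(1,2)$-entry of $E(\xbis n)E(\xbis n)^{-1}=I$ and Lemma~\ref{lem:1} one reads off the identity $\el1n\tel1{n-1}=\el1{n-1}\tel1n$, and then \[\el1n x_{n+1}\tel1n \;=\; \el1{n+1}\tel1n + \el1n\tel1{n-1},\] the right-hand side being a J-polynomial by Proposition~\ref{prop:2}. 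Substituting $x_{n+1}\mapsto 1$ (which preserves the Jordan property) finishes the job. So the paper exploits the work already done in Proposition~\ref{prop:2} together with a matrix identity and a specialization trick, yielding a proof with essentially no new computation; your approach is self-contained and shows that Proposition~\ref{prop:3} can be proved independently of Proposition~\ref{prop:2}, at the cost of repeating the same telescoping machinery.
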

\begin{proof}
This is clear when $n<0$. If $n\geq 0$ then we infer from the
$(1,2)$-entry of $E(\xbis n)\cdot E(\xbis n)^{-1}=I$ and Lemma
\ref{lem:1} that
\begin{equation*}
  -\el1 n \tel 1 {n-1} + \el1{n-1} \tel1{n} = 0.
\end{equation*}
Hence $\el 1 {n+1} \tel 1 n
  = \el 1 n x_{n+1} \tel 1n - \el 1{n-1}\tel 1n
  = \el 1 n x_{n+1} \tel 1n - \el 1{n}  \tel 1{n-1}$,
where we used (\ref{eq:e-ij-rechts}) and the equation above. So
\begin{equation*}
  \el 1 n x_{n+1} \tel 1n = \el 1 {n+1} \tel 1 n + \el 1{n} \tel 1{n-1},
\end{equation*}
and Proposition \ref{prop:2} yields that $\el 1 n x_{n+1} \tel 1n$ is
a J-polynomial. This property remains unaltered if we substitute
$x_{n+1}$ by $1$.
\end{proof}

Our next result generalizes \cite[Lemma 1.2]{lima+l-77a} and is based
upon the previous propositions. It will be the backbone of many
considerations. The theorem says that if certain entries of a matrix
$E(T)$ are of a particular form, then so are the corresponding
entries in $E(T^\alpha)$.

\begin{thm}\label{thm:inv-0}
Let $\alpha:R\to R'$ be a Jordan homomorphism and let $T\in R^n$,
$n\geq 0$. Then the following holds:
\begin{enumerate}
  \item $\ele 1nT\in R^*$ implies $\ele 1n{T^\alpha}\in {R'}^*$.
  \item If $\ele 1nT\in R^*$ and $\ele 1{n-1}{T}=0$ then
   $\ele 1{n-1}{T^\alpha}=0'$.
\end{enumerate}
\end{thm}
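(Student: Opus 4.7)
The plan is to prove (a) first, via Propositions \ref{prop:1} and \ref{prop:3}, and then to derive (b) from (a) using Proposition \ref{prop:2} together with a polynomial identity extracted from Lemma \ref{lem:1}.

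For part (a), starting from $\ele{1}{n}{T}\in R^*$, I would first invoke Proposition \ref{prop:1} to deduce $\tele{1}{n}{T}\in R^*$, so that the product $\ele{1}{n}{T}\cdot \tele{1}{n}{T}$ is a unit in $R$. Proposition \ref{prop:3} tells us that $\el{1}{n}\tel{1}{n}$ is a J-polynomial, so applying $\alpha$ yields
\[
(\ele{1}{n}{T}\cdot \tele{1}{n}{T})^\alpha = \ele{1}{n}{T^\alpha}\cdot \tele{1}{n}{T^\alpha},
\]
a unit in $R'$. The key point is that the J-polynomial property holds for \emph{every} sequence in $R$; I would apply it a second time to the reversed sequence $T':=(t_n,t_{n-1},\ldots,t_1)$. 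Because $\ele{1}{n}{T'}=\tele{1}{n}{T}$ and $\tele{1}{n}{T'}=\ele{1}{n}{T}$, the same argument produces that $\tele{1}{n}{T^\alpha}\cdot \ele{1}{n}{T^\alpha}$ is also a unit in $R'$. Therefore $\ele{1}{n}{T^\alpha}$ admits both a right and a left inverse in $R'$, and so belongs to ${R'}^*$.

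For part (b), I would first extract the polynomial identity $\el{1}{n}\tel{1}{n-1}=\el{1}{n-1}\tel{1}{n}$ in $\ZX$ from the $(1,2)$-entry of $E(\xbis{n})\cdot E(\xbis{n})^{-1}=I$, read off using Lemma \ref{lem:1}. Combined with the hypothesis $\ele{1}{n-1}{T}=0$, this yields $\ele{1}{n}{T}\cdot \tele{1}{n-1}{T}=0$. By Proposition \ref{prop:2}, $\el{1}{n}\tel{1}{n-1}$ is a J-polynomial, so $\ele{1}{n}{T^\alpha}\cdot \tele{1}{n-1}{T^\alpha}=0'$. Part (a) then gives $\ele{1}{n}{T^\alpha}\in{R'}^*$, which forces $\tele{1}{n-1}{T^\alpha}=0'$. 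Applying the same polynomial identity inside $R'$ produces $\ele{1}{n-1}{T^\alpha}\cdot\tele{1}{n}{T^\alpha}=0'$, and since $\tele{1}{n}{T^\alpha}\in {R'}^*$ by Proposition \ref{prop:1} (applied in $R'$ after invoking part (a)), I conclude $\ele{1}{n-1}{T^\alpha}=0'$.

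The main obstacle I anticipate is the noncommutative setting: the J-polynomial formalism delivers only equalities of products, and in a general ring one-sided invertibility need not entail two-sided invertibility. The idea that unlocks (a) is that the J-polynomial property holds for arbitrary sequences, so I may freely evaluate Proposition \ref{prop:3} at the reversed sequence in order to obtain the opposite-order product. Everything else is bookkeeping via Lemma \ref{lem:1} and the propositions already available.
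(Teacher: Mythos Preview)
Your proof of (a) is essentially the paper's own argument: invoke Proposition~\ref{prop:1} to obtain $\tele{1}{n}{T}\in R^*$, apply the J-polynomial $\el{1}{n}\tel{1}{n}$ of Proposition~\ref{prop:3} to get right invertibility of $\ele{1}{n}{T^\alpha}$, and then evaluate the same J-polynomial at the reversed sequence to obtain left invertibility.

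Your proof of (b) is correct but follows a different route from the paper's. The paper extends $T$ to $(T,1)$ and uses the recursion $\ele{1}{n+1}{T,1}=\ele{1}{n}{T}-\ele{1}{n-1}{T}$; it then compares the J-polynomials $\el{1}{n+1}\tel{1}{n}$ and $\el{1}{n}\tel{1}{n}$ (Propositions~\ref{prop:2} and~\ref{prop:3}) at $(T,1)$ and at $T$, cancels the common right factor $\tele{1}{n}{T^\alpha}$, and reads off $\ele{1}{n-1}{T^\alpha}=0'$ from the recursion in $R'$. You instead work directly with the identity $\el{1}{n}\tel{1}{n-1}=\el{1}{n-1}\tel{1}{n}$ (the very identity extracted in the paper's proof of Proposition~\ref{prop:3}), use Proposition~\ref{prop:2} and part~(a) to cancel the unit $\ele{1}{n}{T^\alpha}$ on the left, obtaining $\tele{1}{n-1}{T^\alpha}=0'$, and then feed this back into the identity in $R'$ and cancel the unit $\tele{1}{n}{T^\alpha}$ on the right. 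Your argument avoids the sequence-extension trick and does not call Proposition~\ref{prop:3} a second time in~(b); the paper's argument needs only a single cancellation. Both are of comparable length and difficulty.
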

\begin{proof}
(a) We deduce from Proposition \ref{prop:1} that $\tele 1nT\in R^*$.
Proposition \ref{prop:3} and $R^{*\alpha}\subseteq {R'}^*$ establish
that
\begin{equation*}
  \ele 1n{T^\alpha}\tele 1n{T^\alpha}
  = (\el 1n\tel 1n)(T^\alpha)
  =\big((\el 1n \tel 1n)(T)\big)^\alpha
  =\big(\ele 1nT \tele 1nT\big)^\alpha \in{R'}^*.
\end{equation*}
So $\ele 1n{T^\alpha}$ is right invertible. Let $\widetilde T$ be the
finite sequence $T$ written in reverse order. Then
\begin{equation*}
 \tele 1n{T^\alpha}\ele 1n{T^\alpha}
 =\ele 1n{\widetilde T^\alpha}\tele 1n{\widetilde T^\alpha}
 =\big(\ele 1n{\widetilde T} \tele 1n{\widetilde T}\big)^\alpha
 =\big(\tele 1n{ T}           \ele 1n{T}\big)^\alpha
 \in {R'}^*,
\end{equation*}
where we used the equation from above in the second step. Hence $\ele
1n{T^\alpha}$ is also left invertible.
\par
(b) We read off from (\ref{eq:e-ij-rechts}) the identity
\begin{equation}\label{eq:e.1}
\ele{1}{n+1}{\xbis n,1} = \el{1}{n}\cdot 1 - \el 1{n-1}.
\end{equation}
So $\ele 1{n-1}{T}=0$ implies $\ele{1}{n+1}{T,1} = \ele{1}{n}{T}$,
whereas $\tele{1}{n}{T,1}=\tele{1}{n}{T}$ holds trivially. Since
$\el{1}{n+1}\tel{1}{n}$ and $\el{1}{n}\tel{1}{n}$ are J-polynomials
by Proposition \ref{prop:2} and Proposition \ref{prop:3}, we obtain
that
\begin{eqnarray*}
    \ele{1}{n+1}{T^\alpha,1'}\tele{1}{n}{T^\alpha,1'}
  &=& (\el{1}{n+1}\tel{1}{n})(T^\alpha,1')
  = \big((\el{1}{n+1}\tel{1}{n})(T,1)\big)^\alpha\\
  &=& \big(\ele{1}{n}{T}\tele{1}{n}{T}\big)^\alpha
  = \ele{1}{n  }{T^\alpha}   \tele{1}{n}{T^\alpha}.
\end{eqnarray*}
Now Proposition \ref{prop:1} and (a) allow to cancel the unit
$\tele{1}{n}{T^\alpha,1'}=\tele{1}{n}{T^\alpha}$. Hence
(\ref{eq:e.1}) forces that $\ele 1{n-1}{T^\alpha} = 0'$.
\end{proof}

We are now in a position to show our first main result.

\begin{thm}\label{thm:gruppen}
Let $\alpha:R\to R'$ be a Jordan homomorphism and denote by $R''$ the
subring of $R'$ generated by $R^\alpha$. Then the following
statements are true:
\begin{enumerate}
\item
If $T\in\cS(R)$ then $E(T)\in H$ implies $E(T^\alpha)\in H''$,
where $H$ and $H''$ denote the centres of $\E_2(R)$ and $\E_2(R'')$,
respectively.
\item
The set
 \begin{equation}\label{def:N-alpha}
  N_\alpha:=\{E(T^\alpha)\mid T\in\cS(R) \mbox{ and } E(T)=I\}
 \end{equation}
is contained in $H''$.
\item
$N_\alpha$ is a normal subgroup of\/ $\E_2(R'')$. Furthermore, the
mapping
  \begin{equation}\label{def:gruppenabb}
    \alpha_\E : \E_2(R) \to \E_2(R'')/N_\alpha :
    E(T)\mapsto N_\alpha\cdot E(T^\alpha),
  \end{equation}
where $T\in\cS(R)$, is a well defined homomorphism of groups.
\end{enumerate}
\end{thm}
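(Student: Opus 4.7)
My plan is to prove (a) first, note that (b) is the special case $a=1$ of (a), and derive (c) from (a) and (b) by standard group-theoretic manipulations that rely on $E(T)E(T')=E(T,T')$ and the componentwise action of $\alpha$ on sequences.

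For (a), suppose $E(T)=a\cdot I$ with $a\in Z(R)^*$ and $T=(\tbis n)$. Comparing with Lemma~\ref{lem:1} gives the explicit values of $\ele1nT,\ele1{n-1}T,\ele2nT,\ele2{n-1}T$ and of the four $\tel{}{}$-entries of $E(T)^{-1}$. Proposition~\ref{prop:3}, applied to $T$ and to its reverse $\tilde T$, then shows that $b:=\ele1n{T^\alpha}$ is a unit of $R''$ with inverse $\tele1n{T^\alpha}$. Proposition~\ref{prop:2}, applied to $T$ and $\tilde T$, gives $\ele1{n-1}{T^\alpha}=\tele1{n-1}{T^\alpha}=0'$. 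Substituting these known entries into $E(T^\alpha)\cdot E(T^\alpha)^{-1}=I$ forces $\ele2{n-1}{T^\alpha}=-b$, so $E(T^\alpha)$ is already known to be lower triangular with diagonal entries $b$.

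The main obstacle is the two remaining claims: (i) $b\in Z(R'')$ and (ii) $\ele2n{T^\alpha}=0'$. For (i), I would apply the J-polynomial $\el1{n+1}\tel1n$ (Proposition~\ref{prop:2} with index $n+1$) to the extended sequence $(T,s)$, $s\in R$ arbitrary. Using the recursions (\ref{eq:e-ij-rechts}), (\ref{eq:te-ij-links}) and centrality of $a$, the $(T,s)$-side evaluates to $as\cdot a^{-1}=s$, whereas the $(T^\alpha,s^\alpha)$-side simplifies to $bs^\alpha b^{-1}$. The resulting identity $bs^\alpha b^{-1}=s^\alpha$, holding for every $s\in R$, shows that $b$ centralises $R^\alpha$, and hence the whole subring $R''$. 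For (ii), the variable substitution $x_i\mapsto x_{n+2-i}$ transforms the J-polynomial $\el1{n+1}\tel1n$ into $\tel1{n+1}\el2{n+1}$, which is itself a J-polynomial by Examples~\ref{exa:j-poly}. Evaluated on $(T,1)$ it gives $a^{-1}\cdot a=1$, while on $(T^\alpha,1')$ it reduces to $b^{-1}\bigl(\ele2n{T^\alpha}+b\bigr)=b^{-1}\ele2n{T^\alpha}+1'$; equating these forces $\ele2n{T^\alpha}=0'$. Together (i) and (ii) yield $E(T^\alpha)=b\cdot I\in H''$, proving (a); (b) then follows by specialising $a=1$.

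For (c), closure of $N_\alpha$ under products is immediate from $E(T,T')=E(T)E(T')$ and $(T,T')^\alpha=(T^\alpha,T'^\alpha)$. For closure under inverses I would use (\ref{eq:E-inv}) factor by factor to rewrite $E(T)^{-1}$ as $E(T^*)$ for the explicit sequence $T^*=(0,-t_n,0,0,-t_{n-1},0,\ldots,0,-t_1,0)$; since $0^\alpha=0'$ and $(-t)^\alpha=-t^\alpha$, one has $E((T^*)^\alpha)=E(T^\alpha)^{-1}$ as well, so $E(T)=I$ gives $E(T^*)=I$ and hence $E(T^\alpha)^{-1}\in N_\alpha$. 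Normality of $N_\alpha$ is automatic from (b). Well-definedness of $\alpha_\E$ follows by the same device applied to two representations $E(T)=E(T')$, which rewrites as $E(T,(T')^*)=I$, yielding $E(T^\alpha)E((T')^\alpha)^{-1}\in N_\alpha$. The homomorphism property is immediate from $E(T,T')=E(T)E(T')$ combined with $(T,T')^\alpha=(T^\alpha,T'^\alpha)$.
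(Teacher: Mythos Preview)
Your argument for (c) is essentially the paper's, including the explicit inverse sequence $(0,-t_n,0,\ldots,0,-t_1,0)$, which the paper calls $\widehat T$. Your route for (a), however, differs from the paper's, and it contains one bookkeeping slip that you should fix.

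\textbf{The slip.} Applying Proposition~\ref{prop:2} to the reversed sequence $\tilde T$ does \emph{not} yield $\ele1{n-1}{T^\alpha}=0'$. Under the substitution $x_i\mapsto t_{n+1-i}$ one has $\tele1{n-1}{\tilde T}=\ele2n{T}$, so what you actually obtain from $\tilde T$ is $\ele2n{T^\alpha}=0'$ --- which is exactly your claim (ii). The entry you wanted, $\ele1{n-1}{T^\alpha}=0'$, then drops out of the $(1,2)$-entry of $E(T^\alpha)E(T^\alpha)^{-1}=I$, since you already have $\tele1{n-1}{T^\alpha}=0'$ (from Proposition~\ref{prop:2} on $T$) and $\tele1n{T^\alpha}=b^{-1}$. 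With this correction everything you wrote goes through; your separate computation for (ii) is then redundant, and your centrality argument via $(T,s)$ is fine as stated.

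\textbf{Comparison with the paper.} The paper does not work directly with the J-polynomials of Propositions~\ref{prop:2}--\ref{prop:3} here; it applies the packaged consequence Theorem~\ref{thm:inv-0} twice. First to $T$, giving that $a'=\ele1n{T^\alpha}$ is a unit and $b'=\ele1{n-1}{T^\alpha}=0'$. Then, for the remaining three assertions ($c'=0'$, $d'=a'$, and $a'\in Z(R'')$), it uses a conjugation trick: for arbitrary $s\in R$ set $S:=(s,T,0,-s,0)$, so that $E(S)=E(s)E(T)E(s)^{-1}=E(T)$ by centrality of $a$; applying Theorem~\ref{thm:inv-0} to $S$ and computing the first row of $E(S^\alpha)=E(s^\alpha)E(T^\alpha)E(s^\alpha)^{-1}$ gives $d'\in{R''}^*$ and $d's^\alpha-s^\alpha a'-c'=0'$ for all $s$. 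Setting $s=0,1$ finishes everything at once. Your approach trades this single conjugation for several hand-tailored evaluations of J-polynomials on $T$, $\tilde T$, $(T,s)$ and $(T,1)$; it is more computational but shows that Theorem~\ref{thm:inv-0} is not formally needed as an intermediate step.
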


\begin{proof}
(a) Let $T\in R^n$ be a sequence such that $E(T)\in H$. So, by
(\ref{def:zentrum}), there is an $a\in Z(R)^*$ with $E(T)
=\diag(a,a)$. Put $\SMat2{a' & b'\\c' &d'}:=E(T^\alpha)$. By virtue
of (\ref{eq:Ex}), all entries of this matrix are in $R''$. The first
row of $E(T)$ is $\big(\ele{1}{n}{T},\ele{1}{n-1}{T}\big)=(a,0)$. We
infer from Theorem \ref{thm:inv-0} that $a'$ is a unit and that
$b'=0'$. Next choose any $s\in R$ and let $S:=(s,T,0,-s,0)$. By
(\ref{eq:E-inv}),
\begin{eqnarray}
  E(S)
  &=&
  E(s)E(T)E(s)^{-1} = E(T) = \diag(a,a),\\
  E(S^\alpha)
  &=&
  E(s^\alpha)E(T^\alpha)E(s^\alpha)^{-1}
  =\Mat2{d' & d's^\alpha-s^\alpha a'-c'\\
         0' & a'                         }.
\end{eqnarray}
As before, Theorem \ref{thm:inv-0} shows that $d'$ is a unit and that
$d's^\alpha-s^\alpha a'-c'$ vanishes for all $s\in R$. Letting
$s:=0,1$ yields $c'=0'$ and $a'=d'$. Hence $a's^\alpha=s^\alpha a'$
for all $s\in R$ and therefore $a'\in Z(R'')$. We infer that
$E(T^\alpha)\in H''$.
 \par
(b) This is immediate from (a).
 \par
(c) First, let us introduce the following notation. Given $T\in R^n$,
$n\geq 0$, we define
  \begin{equation}\label{def:T-dach}
   \widehat T := (0,-t_n,0,0,-t_{n-1},0,\ldots,0,-t_1,0)\in R^{3n}.
  \end{equation}
We observe that (\ref{eq:E-inv}) implies $E(\widehat T)=E(T)^{-1}$.
Also, when (\ref{def:T-dach}) is applied accordingly to sequences in
$R'$, then $\widehat T^\alpha=\widehat{T^\alpha}$.
\par
Clearly, $I'\in N_\alpha$. Next assume that $E(T)=E(V)$ for
$T,V\in\cS(R)$. Then $E(T,\widehat V)=I$, and $
E(T^\alpha)E(V^\alpha)^{-1} = E(T^\alpha,\widehat{V^\alpha}) =
E(T^\alpha,\widehat{V}^\alpha) \in N_\alpha$. Hence $E(T^\alpha)\in
N_\alpha \cdot E(V^\alpha)$.
\par
Any two matrices in $N_\alpha$ can be written in the form
$E(T^\alpha)$, $E(V^\alpha)$ with $E(T)=E(V)=I$. By the above,
$E(T^\alpha)E(V^\alpha)^{-1}\in N_\alpha$. So $N_\alpha$ is a
subgroup of $\E_2(R'')$ which is normal by (a).
\par
Summing up, the mapping $\alpha_\E$ is well defined and obviously it
is a homomorphism.
\end{proof}

The reason for introducing the subring $R''$ in the theorem above is
that $N_\alpha$ need not be normal in $\E_2(R')$; cf.\ Example
\ref{exa:jordan}~(f) below.

\begin{cor}\label{cor:neu}
If, under the assumptions of Theorem \ref{thm:gruppen}, the group
$N_\alpha$ is trivial, then
\begin{equation*}
   \alpha_\E : \E_2(R) \to \E_2(R'') : E(T) \mapsto E(T^\alpha),
\end{equation*}
where $T\in\cS(R)$, is a well defined mapping.
\end{cor}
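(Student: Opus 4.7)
The plan is to derive this directly from Theorem \ref{thm:gruppen}(c), which has already done all the heavy lifting. That theorem produces a well defined homomorphism
\begin{equation*}
  \alpha_\E : \E_2(R) \to \E_2(R'')/N_\alpha, \qquad E(T) \mapsto N_\alpha \cdot E(T^\alpha).
\end{equation*}
If $N_\alpha$ consists of the identity matrix only, the canonical projection $\E_2(R'') \to \E_2(R'')/N_\alpha$ is a group isomorphism. Composing its inverse with $\alpha_\E$ yields the stated mapping $E(T)\mapsto E(T^\alpha)$, which is automatically well defined.

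Alternatively, one can extract the relevant step from the proof of Theorem \ref{thm:gruppen}(c) and spell it out. The only issue is well-definedness: one must show that whenever $T,V\in\cS(R)$ satisfy $E(T)=E(V)$, also $E(T^\alpha)=E(V^\alpha)$ holds in $\E_2(R'')$. Under the assumption $E(T)=E(V)$, the concatenated sequence $(T,\widehat V)$ satisfies $E(T,\widehat V)=E(T)E(V)^{-1}=I$, and since $(T,\widehat V)^\alpha = (T^\alpha,\widehat{V^\alpha})$ by the identity $\widehat T^\alpha = \widehat{T^\alpha}$ observed in the proof of Theorem \ref{thm:gruppen}(c), we obtain
\begin{equation*}
  E(T^\alpha)E(V^\alpha)^{-1} = E(T^\alpha,\widehat{V^\alpha}) = E\bigl((T,\widehat V)^\alpha\bigr)\in N_\alpha.
\end{equation*}
The hypothesis $N_\alpha=\{I'\}$ then forces $E(T^\alpha)=E(V^\alpha)$.

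I do not expect any obstacle here: the corollary is essentially a restatement of Theorem \ref{thm:gruppen} under the additional hypothesis, and the main technical content (the normality of $N_\alpha$ and the compatibility with inversion of sequences) was already established in the proof of the theorem. The only thing worth stressing is the passage from the quotient formulation to the honest mapping formulation, which is the whole point of introducing the hypothesis on $N_\alpha$.
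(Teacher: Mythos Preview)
Your proposal is correct and matches the paper's approach: the paper states the corollary without proof, treating it as an immediate consequence of Theorem \ref{thm:gruppen}(c), which is exactly what your first paragraph does. Your alternative direct argument is also valid and simply unpacks the relevant step of that proof.
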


\begin{exas}\label{exa:jordan}
\begin{enumerate}
\item
Let $\alpha:R\to R'$ be a homomorphism. Then there is the
homomorphism $\alpha_*:\GL_2(R)\to \GL_2(R') : M\mapsto M^\alpha$,
i.e., $\alpha$ is applied to each entry of $M$. Obviously,
$E(t)^{\alpha_*}=E(t^\alpha)$ for all $t\in R$, whence
$N_\alpha=\{I'\}$ and $\alpha_*$ restricts to $\alpha_\E:\E_2(R)\to
\E_2(R'')$.
\item
Let $\alpha:R\to R'$ be an antihomomorphism. The mapping
$\alpha_{**}:\GL_2(R)\to \GL_2(R') : M\mapsto
E(0')^{-1}((M^\T)^{\alpha_*})^{-1} E(0')$, where $M^\T$ denotes the
transpose of $M$, is a homomorphism. It maps each $E(t)\in\E_2(R)$ to
$E(t^\alpha)$. Hence $N_\alpha=\{I'\}$ and $\alpha_{**}$ restricts to
$\alpha_\E$. If $R'$ is commutative then $M^{\alpha_{**}}=
\det(M^{\alpha_*})^{-1}M^{\alpha_*}$ for all $M\in\GL_2(R)$.
\item
Suppose that $R=\prod_{\lambda\in\Lambda}R_\lambda$ is the direct
product of rings $R_\lambda$. Then, up to isomorphism,
$\GL_2(R)=\prod_{\lambda\in\Lambda}\GL_2(R_\lambda)$ and
$\E_2(R)=\prod_{\lambda\in\Lambda}\E_2(R_\lambda)$. Similarly, let
$R'=\prod_{\lambda\in\Lambda}R'_\lambda$ and let $\alpha_\lambda
: R_\lambda \to R'_\lambda$ be a family of mappings, where each
$\alpha_\lambda$ is a homomorphism or antihomomorphism. Then
$\alpha:=\prod_{\lambda\in\Lambda} \alpha_\lambda$ is in general a
proper Jordan homomorphism $R \to R'$. Now, by (a) or (b), we can
choose at least one homomorphism $\beta_\lambda:\GL_2(R_\lambda)\to
\GL_2(R'_\lambda)$ which restricts to $\alpha_{\lambda,\E}$. So
$\beta:=\prod_{\lambda\in\Lambda}\beta_\lambda : \GL_2(R)\to
\GL_2(R')$ is a homomorphism with $E(t)^\beta=E(t^\alpha)$ for all
$t\in R$. Finally, as above, $N_\alpha=\{I'\}$ and $\beta$ restricts
to $\alpha_\E$.
\item
The following class of examples is essentially due to
\textsc{A.~Herzer} \cite[4.2]{herz-87a}. Although our assumptions are
weaker, \textsc{Herzer's} proofs immediately carry over to our
setting.
\par
Suppose that $D$ is a commutative ring and let $B$ be a $D$-algebra
with a $D$-linear homomorphism $\chi:B\to D$ of rings. Denote by $M$
a left module over $D$ which is endowed with a $D$-bilinear,
alternating, and associative product $M\times M\to M$. Then
$R:=B\oplus M$ becomes a $D$-algebra, if a product is defined by
\begin{equation*}
  (b_1+m_1)(b_2+m_2):=b_1b_2 + {b_1}^\chi m_2 + {b_2}^\chi m_1
  + m_1m_2
\end{equation*}
for all $b_1,b_2\in B$ and $m_1,m_2\in M$. (The commutativity of $D$
guarantees that $R$ is associative.) Assume that $B'$, $\chi'$, and
$M'$ are given as above and that $\alpha_1:B\to B'$ is a homomorphism
or antihomomorphism of $D$-algebras satisfying $\chi=\alpha_1\chi'$.
Also, let $\alpha_2:M\to M'$ be an arbitrary $D$-linear mapping. Then
\begin{equation*}
  \alpha : R \to R' : b+m \mapsto b^{\alpha_1}+m^{\alpha_2}\;\;(b\in
  B,\; m\in M)
\end{equation*}
is a $D$-linear Jordan homomorphism.
\item
With the notation introduced in Example (d) let $B=B'$, $\chi=\chi'$,
and $M=M'=D^3$. Write $(\eps_1,\eps_2,\eps_3)$ for the canonical
basis of $D^3$. A product on $D^3$ with the properties mentioned
above is given by $\eps_i^2=0$, $\eps_i\eps_j=-\eps_j\eps_i$,
$\eps_1\eps_2=\eps_3$, $\eps_i\eps_3=0$ for all $i,j\in\{1,2,3\}$. We
define a $D$-linear Jordan automorphism $\alpha$ of $R$ by
$\alpha_1=\id_B$, whereas $\alpha_2$ fixes $\eps_1$ and interchanges
$\eps_2$ with $\eps_3$. Now $N_\alpha\neq\{I\}$ follows from
\begin{eqnarray*}
  E(\eps_1)E(\eps_3)E(-\eps_1)E(-\eps_3)&=& I,\\
  E(\eps_1)E(\eps_2)E(-\eps_1)E(-\eps_2)&=& \diag(1-\eps_3,1-\eps_3).
\end{eqnarray*}
So $\alpha$ is a proper Jordan automorphism. Also there is no mapping
$\E_2(R)\to \E_2(R)$ with $E(T)\mapsto E(T^\alpha)$ for all
$T\in\cS(R)$.
\item
Let $R$ be given as in Example (e) with $B=D$ and $\chi=\id_D$. (Then
$R=D^4$ is isomorphic to the exterior algebra $\bigwedge D^2$.)
Furthermore $\alpha:R\to R$ is defined as in (e), but we reserve the
letter $R'$ for the $D$-algebra of ($4\times 4$)-matrices over $D$.
The right regular representation $\rho$ of $R$ maps each $a\in R$ to
that ($4\times 4$)-matrix which describes the linear mapping $R\to R:
x\mapsto xa$ in terms of the basis $(1,\eps_1,\eps_2,\eps_3)$. The
product $\alpha\rho$ is a Jordan monomorphism, say $\beta:R\to R'$.
We have $R''=R^\beta=R^\rho$.
\par
The first row of the ($4\times 4$)-matrix $u':=(1-\eps_3)^\rho\in R'$
reads $(1,0,0,-1)$, since $1(1-\eps_3)=1-\eps_3$. So it is not in the
centre of $R'$ and there is a ($4\times 4$)-matrix $r'\in R'$ that
does not commute with $u'$. A straightforward calculation shows that
$E(r')^{-1}\diag(u',u')E(r')$ is not diagonal. On the other hand, all
matrices in $N_\beta$ are diagonal, since they are in the centre
$H''$ of $\E_2(R'')$.  Example (e) shows that  $\diag(u',u')\in N_\beta$,
whence $N_\beta$ is not normal in $\E_2(R')$.
\item
With $R$ as in Example (e) let $\alpha_1=\id_B$, whereas $\alpha_2$
is given by  $\eps_1\mapsto\eps_1$, $\eps_2\mapsto\eps_2$, and
$\eps_3\mapsto 0$. Then $R^\alpha$ is not a subring of $R$, since
$\eps_3=\eps_1^\alpha\eps_2^\alpha$ is not in the image of $\alpha$.
\end{enumerate}
\end{exas}

\section{The projective line over a ring}\label{se:line}
\para\label{para:proj_gerade}
Consider the free left $R$-module $R^2$. The \emph{projective line
over\/} $R$ is the orbit of the free cyclic submodule $R(1,0)$ under
the natural right action of $\GL_2(R)$. In other words, $\PP(R)$ is
the set of all $p\leq R^2$ such that $p=R(a,b)$, where $(a,b)$ is the
first row of an invertible matrix; compare \cite[p.~785]{herz-95}. If
also $(c,d)$ is the first row of an invertible matrix,  then
$R(a,b)=R(c,d)$ if and only if there is a unit $u\in R^*$ with
$(c,d)=u(a,b)$ \cite[Proposition 2.1]{blu+h-00b}.
 \par
Let $\{(a,b), (c,d)\}$ be a basis of $R^2$. Then the points
$p=R(a,b)$ and $q= R(c,d)$ are called \emph{distant}. In this case we
write $p\,\dis\,q$. The vertices of the \emph{distant graph} on
$\PP(R)$ are the points of $\PP(R)$, the edges of this graph are the
unordered pairs of distant points. The set $\PP(R)$ can be decomposed
into \emph{connected components} (maximal connected subsets of the
distant graph); cf.\ \cite[p.~108]{blu+h-01a}.
\par
The orbit of $R(1,0)\in\PP(R)$ under the group $\E_2(R)$ is exactly
the connected component of $R(1,0)$ \cite[Theorem 3.2]{blu+h-01a}. It
will be denoted by $C$. If a matrix $E(T)\in\E_2(R)$ fixes $R(1,0)$
and all points $R(t,1)$ with $t\in R$, then it is easily seen that
$E(T)=\diag(a,a)$ with $a\in Z(R)^*$. On the other hand, each such
$E(T)$ fixes all points of $\PP(R)$. So the kernel of the group
action of $\E_2(R)$ on $C$ (or $\PP(R)$) is the centre $H$ of
$\E_2(R)$; cf.\ (\ref{def:zentrum}). As usual, we write
$\PE_2(R):=\E_2(R)/H$ for the \emph{projective elementary group}.

\begin{lem}\label{lem:2fach}
The group $\E_2(R)$ acts $2$-$\dis$-\emph{transitively} on the
connected component $C\subseteq\PP(R)$, i.e.\ transitively on the set
of ordered pairs of distant points of $C$.
\end{lem}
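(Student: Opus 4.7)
The plan is to reduce $2$-$\dis$-transitivity to moving every distant pair in $C$ to one fixed reference pair. A natural choice is $(R(1,0),R(0,1))$: these points are distant (they correspond to the standard basis of $R^2$), and both lie in $C$, since $R(0,1)=R(1,0)\cdot E(0)$.

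Given an arbitrary ordered pair $(p,q)$ of distant points in $C$, I would first invoke the description of $C$ as the $\E_2(R)$-orbit of $R(1,0)$ to choose $M_1\in\E_2(R)$ with $p\cdot M_1=R(1,0)$. Setting $q':=q\cdot M_1$, the $\GL_2(R)$-action preserves the distant relation, so $R(1,0)\,\dis\,q'$. Writing $q'=R(a,b)$, distantness says $\{(1,0),(a,b)\}$ is a basis of $R^2$, equivalently $\SMat2{1&0\\a&b}\in\GL_2(R)$; left-multiplying by $\SMat2{1&0\\-a&1}$ reduces this matrix to $\SMat2{1&0\\0&b}$, which forces $b\in R^*$. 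Hence $q'=R(b^{-1}a,1)=R(s,1)$ for $s:=b^{-1}a\in R$.

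The final step is to find $M_2\in\E_2(R)$ that fixes $R(1,0)$ and sends $R(s,1)$ to $R(0,1)$. The obvious candidate is $M_2:=\SMat2{1&0\\-s&1}$: direct computation gives $(1,0)M_2=(1,0)$ and $(s,1)M_2=(0,1)$, which covers both incidences. Then $M:=M_1M_2\in\E_2(R)$ carries $(p,q)$ to the reference pair $(R(1,0),R(0,1))$, and combining two such elements proves $2$-$\dis$-transitivity.

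The only real point to check is that $M_2$ actually lies in the elementary subgroup $\E_2(R)$, not merely in $\GL_2(R)$. For this I would note the identity $E(0)^2=-I$, so that $E(0)^3=-E(0)=\SMat2{0&-1\\1&0}$, and then compute
\begin{equation*}
  E(0)^3\cdot E(-s) \;=\; \Mat2{0&-1\\1&0}\Mat2{-s&1\\-1&0} \;=\; \Mat2{1&0\\-s&1} \;=\; M_2,
\end{equation*}
exhibiting $M_2$ as a word in the generators $E(t)$. This is the only subtle step in the argument; everything else is bookkeeping about bases and orbits.
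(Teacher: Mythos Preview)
Your proof is correct and follows essentially the same route as the paper's: reduce to the reference pair $(R(1,0),R(0,1))$, move $p$ to $R(1,0)$ by an element of $\E_2(R)$, observe that the image of $q$ has the form $R(s,1)$, and then apply an elementary matrix that fixes $R(1,0)$ and carries $R(s,1)$ to $R(0,1)$. The only cosmetic difference is that the paper uses the shorter word $E(0)E(-t)=\SMat2{-1&0\\t&-1}$ in place of your $M_2=E(0)^3E(-s)$; the two differ by the central element $-I=E(0)^2$, so this is immaterial.
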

\begin{proof}
Let $(p,q)$ be a pair of distant points in $C$. Since all points of
$C$ are in one orbit of $\E_2(R)$, there exists a matrix in $\E_2(R)$
sending $p$ to $R(1,0)$ and $q$ to $q_1\,\dis\, R(1,0)$. So
$q_1=R(t,1)$ for some $t\in R$. Now $E(0)E(-t)$ fixes $R(1,0)$ and
takes $q_1$ to $R(0,1)$.
\end{proof}

\para\label{para:unter}
Suppose that $R''$ is a subring of a ring $R'$. As $\GL_2(R'')$ is a
subgroup of $\GL_2(R')$, there is a mapping
\begin{equation*}
 \PP(R'')\to\PP(R'):R''(a'',b'')\mapsto R'(a'',b'')
\end{equation*}
which is easily seen to be injective. It will be used to identify
$\PP(R'')$ with a subset of $\PP(R')$. We have to distinguish between the
connected components $C''$ and $C'$ of $R'(1',0')$ in $\PP(R'')$ and
$\PP(R')$, respectively. Since $\E_2(R'')$ is a subgroup of
$\E_2(R')$, we have $C''\subseteq C'$.
\par
However, unless the centre $H''$ of $\E_2(R'')$ lies in the centre
$H'$ of $\E_2(R')$, the group $\PE_2(R'')$ cannot be considered as a
subgroup of $\PE_2(R')$: In general there are two matrices in $\E_2(R'')$
that induce the same transformation on $C''$, but distinct
transformations on $C'$. So we cannot always consider $\PE_2(R'')$ as
a group which acts on $C'$.
\par
The reader should keep these  remarks in mind with regard to the  following
theorem, which is our second main result:

\begin{thm}\label{thm:projektiv}
Let $\alpha:R\to R'$ be a Jordan homomorphism and denote by $R''$ the
subring of $R'$ generated by $R^\alpha$. Then the following
statements are true.
\begin{enumerate}
\item
The mapping
\begin{equation}\label{def:pe-gruppenabb}
  \alpha_\PE: \PE_2(R) \to \PE_2(R'') :
  H\cdot E(T)\mapsto H''\cdot E(T^\alpha),
\end{equation}
where $T\in\cS(R)$, is a well defined homomorphism of groups.
\item
Consider the connected component $C\subseteq\PP(R)$ and the connected
component $C''\subseteq\PP(R'')\subseteq\PP(R')$. Then the mapping
\begin{equation}\label{def:oa}
  \oa: C \to C'' :
  R(1,0)\cdot E(T)\mapsto R'(1',0')\cdot E(T^\alpha),
\end{equation}
where $T\in\cS(R)$, is well defined.
\item
The pair $(\alpha_\PE,\oa)$ is a homomorphism of transformation
groups.
\end{enumerate}
\end{thm}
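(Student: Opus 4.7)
Part (a) follows by factoring the homomorphism from Theorem \ref{thm:gruppen}(c) through centres. Composing $\alpha_\E : \E_2(R) \to \E_2(R'')/N_\alpha$ with the canonical projection $\E_2(R'')/N_\alpha \twoheadrightarrow \PE_2(R'')$ --- legitimate because $N_\alpha \subseteq H''$ by Theorem \ref{thm:gruppen}(b) --- yields a group homomorphism $\E_2(R) \to \PE_2(R'')$. Its kernel contains $H$ since, by Theorem \ref{thm:gruppen}(a), $E(T)\in H$ forces $E(T^\alpha)\in H''$. The map induced on the quotient $\PE_2(R)=\E_2(R)/H$ is precisely $\alpha_\PE$, which is thereby well defined.

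The crux is part (b). I would first establish the intermediate claim: \emph{if $E(T)\in\E_2(R)$ stabilizes $R(1,0)$, then $E(T^\alpha)\in\E_2(R'')$ stabilizes $R'(1',0')$.} For $T\in R^n$, Lemma \ref{lem:1} gives the first row of $E(T)$ as $(\ele 1nT,\,\ele 1{n-1}T)$, so the stabilization condition is $\ele 1nT\in R^*$ together with $\ele 1{n-1}T=0$. Theorem \ref{thm:inv-0} then delivers $\ele 1n{T^\alpha}\in (R'')^*$ and $\ele 1{n-1}{T^\alpha}=0'$, which is exactly the stabilization condition for $E(T^\alpha)$ at $R'(1',0')$.

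For the well-definedness of $\oa$ itself, suppose $R(1,0)\cdot E(T)=R(1,0)\cdot E(V)$, i.e.\ $E(T)E(V)^{-1}$ stabilizes $R(1,0)$. Using $E(V)^{-1}=E(\widehat V)$ from (\ref{def:T-dach}), this product equals $E(T,\widehat V)$. Applying the intermediate claim to the sequence $(T,\widehat V)$ and invoking the identity $\widehat V^\alpha=\widehat{V^\alpha}$ already noted in the proof of Theorem \ref{thm:gruppen}(c), one obtains that $E(T^\alpha)E(V^\alpha)^{-1}=E(T^\alpha,\widehat{V^\alpha})$ stabilizes $R'(1',0')$, whence $R'(1',0')\cdot E(T^\alpha)=R'(1',0')\cdot E(V^\alpha)$. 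The image of $\oa$ lies in $C''$ because it is the $\E_2(R'')$-orbit of $R'(1',0')$.

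Finally, part (c) is a direct verification: for $p=R(1,0)\cdot E(T)$ and $g=H\cdot E(V)$ one has $p\cdot g=R(1,0)\cdot E(T,V)$, so
\[
  \oa(p\cdot g)=R'(1',0')\cdot E(T^\alpha,V^\alpha)=\bigl(R'(1',0')\cdot E(T^\alpha)\bigr)\cdot\bigl(H''\cdot E(V^\alpha)\bigr)=p^{\oa}\cdot\alpha_\PE(g).
\]
The principal obstacle is the intermediate claim in (b): the surrounding work is clean bookkeeping with quotients, but the fact that a Jordan homomorphism respects the point-stabilizer condition is far from evident from the definition and rests on the full strength of Theorem \ref{thm:inv-0}, i.e.\ on the J-polynomial machinery developed in Section \ref{se:jordan}.
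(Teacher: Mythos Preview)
Your proposal is correct and matches the paper's proof essentially line for line: part (a) via factoring $\alpha_\E$ through $H''/N_\alpha$ and then through $H$, part (b) via the stabilizer claim drawn from Theorem~\ref{thm:inv-0}, and part (c) by direct commutation. The only cosmetic difference is that in (b) the paper, having already secured $\alpha_\PE$ in (a), phrases the well-definedness by saying that $H\cdot E(T)E(V)^{-1}$ lies in the stabilizer and hence so does its $\alpha_\PE$-image, whereas you unwrap this explicitly via $E(T)E(V)^{-1}=E(T,\widehat V)$ and apply the stabilizer claim to the concatenated sequence; these are the same argument.
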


\begin{proof}
(a) By Theorem \ref{thm:gruppen}~(b), $N_\alpha\subseteq H''$. So
there exists the canonical epimorphism
\begin{equation*}
 \eta:\E_2(R'')/N_\alpha
 \to
 \big(\E_2(R'')/N_\alpha\big)/(H''/N_\alpha)\cong \E_2(R'')/H''.
\end{equation*}
We identify its image with $\PE_2(R'')$. So, by
(\ref{def:gruppenabb}), the composition  $\alpha_\E\eta:
\E_2(R)\to\PE_2(R'')$ is a homomorphism. It follows from Theorem
\ref{thm:gruppen}~(a) that $H^{\alpha_\E} \subseteq H''/N_\alpha =
\ker\eta$. Hence $H\subseteq\ker\alpha_E\eta$ and $\alpha_\PE$ is a
well defined homomorphism of groups.
 \par
(b) We regard $\PE_2(R)$ and $\PE_2(R'')$ as transformation groups on
the connected components $C$ and $C''$, respectively.
 \par
Suppose that a matrix $E(S)$, $S\in\cS(R)$,  fixes $R(1,0)$.
Thus its first row has the form $(u,0)$ with $u\in R^*$. We infer
from Theorem \ref{thm:inv-0} that the first row of $E(S^\alpha)$
reads $(u',0')$ with a unit $u'\in R''$. So $E(S^\alpha)$ leaves
$R'(1',0')$ invariant. This means that under $\alpha_\PE$ the
stabilizer of $R(1,0)$ is mapped into the stabilizer of $R'(1',0')$.
\par
For each point $p\in C$ there is a sequence $T\in\cS(R)$ such that
$p=R(1,0)\cdot E(T)$. Also let $p=R(1,0)\cdot E(V)$ with
$V\in\cS(R)$. So the transformation $H\cdot E(T)E(V)^{-1}\in\PE_2(R)$
fixes $R(1,0)$, whence the transformation $H''\cdot
E(T^\alpha)E(V^\alpha)^{-1}\in\PE_2(R'')$ fixes $R'(1',0')$.
Therefore $\oa$ is well defined.
\par
(c) By (a) and (b), the diagram
\begin{equation}\label{eq:diagramm}
\begin{diagram}
 C              & \rTo^{E(T)}        & C             \\
 \dTo^{\oa}     &                    & \dTo_{\oa}    \\
 C''            & \rTo^{E(T^\alpha)} & C''
\end{diagram}
\end{equation}
commutes for each $T\in\cS(R)$, whence the assertion follows.
\end{proof}
\para
For each point $p\in C$ there is a smallest integer $n\geq 0$ such
that $p=R(1,0)\cdot E(T)$ with $T\in R^n$. In fact, $n$ is just the
distance of $p$ and $R(1,0)$ in the distant graph \cite[formula
(10)]{blu+h-01a}. The supremum of all distances between points of $C$
is a non-negative integer or $\infty$. It is called the
\emph{diameter} of $C$. Furthermore, we have $(1,0)\cdot
E(0)^2=(-1,0)$ and $(1,0)\cdot E(t)=(1,0)\cdot E(1,t+1)$ for all
$t\in R$. So if the diameter of $C$ is finite, say $d$, then it is
enough to consider sequences $T\in R^m$ with fixed length
$m:=\max\{2,d\}$ in order to reach all points of $C$. By
(\ref{eq:Ex}), in this case $\oa$ can be described by the single
formula
\begin{equation}\label{eq:explizit}
  R\big(\ele{1}{m}{T},\ele{1}{m-1}{T}\big)^\oa =
  R'\big(\ele{1}{m}{T^\alpha},\ele{1}{m-1}{T^\alpha}\big)
  \mbox { with } T\in R^m.
\end{equation}
This generalizes \cite[Theorem 2.4]{bart-89}, where $m=2$ and $R$ is
a ring of stable rank $2$. See also \cite[Remark 5.4]{blu+h-00y} for
the special case of an antiisomorphism of rings. We shall see in
Example \ref{exa:polynomring} that there are rings where one needs
sequences $T\in R^n$ for infinitely many $n\geq 0$ in order to
describe $\oa$.
\para
Let us state some immediate consequences of Theorem
\ref{thm:projektiv}. If $(a,b)$ and $(a',b')$ are the first rows of the
matrices $E(V)\in\E_2(R)$ and $E(V^\alpha)$, respectively, then
(\ref{eq:diagramm}) implies that
\begin{equation}\label{eq:oa}
  \big(R(a,b)\cdot E(T)\big)^{\oa}
  = R'(a',b')\cdot E(T^\alpha)
\end{equation}
for all $T\in\cS(R)$. Letting $E(V)=E(0)$ we get $(a,b)=(0,1)$ and
$(a',b')=(0',1')$. Therefore also the second rows of the matrices $E(T)$
and $E(T^\alpha)$, with $T\in\cS(R)$, represent points corresponding
under $\oa$. We deduce from (\ref{def:oa}), (\ref{eq:oa}), and
(\ref{eq:E-inv}) that
\begin{eqnarray}
  R(t,1)^\oa
  &=&
  \big(R(1,0)\cdot E(t)\big)^\oa
     = R'(1',0')\cdot E(t^\alpha)
     = R'(t^\alpha,1'),
  \label{eq:oa-affin_t1}
  \\
  R(1,t)^\oa
  &=&
  \big(R(0,1)\cdot E(0,-t,0)\big)^\oa
     = R'(0',1')\cdot E(0',-t^\alpha,0')
     = R'(1',t^\alpha),
  \label{eq:oa-affin_1t}
\end{eqnarray}
for all $t\in R$. In particular, $\oa$ is indeed an extension of the
mapping described in (\ref{eq:vorspann_affin}). Furthermore, $\oa$ is
a \emph{fundamental mapping}; this means that $R(1,0)^\oa=R'(1',0')$,
$R(0,1)^\oa=R'(0',1')$, and $R(1,1)^\oa=R'(1',1')$.
 \par
If $R^\alpha=R''$ then each point of $C''$ can be written as
$R'(1',0')\cdot E(T^\alpha)$, whence $C^\oa=C''$. Similarly, if
$R^\alpha=R'$ then $C^\oa=C'=C''$. If $\oa$ injective then
(\ref{eq:oa-affin_t1}) implies the injectivity of $\alpha$. Also, if
$\alpha$ is bijective then $\alpha^{-1}:R'\to R$ is a Jordan
isomorphism and $\overline{\alpha^{-1}}$ is easily seen to be the
inverse of $\oa$. What remains open here is whether or not
$C^\oa=C''$ implies that $R^\alpha=R''$ and whether or not $\oa$ is
injective if $\alpha$ is injective.
 \para\label{para:harmonic}
A mapping $\PP(R)\to\PP(R')$ is said to be \emph{harmonic} if it
preserves harmonic quadruples or, in other words, if it preserves
cross ratio $-1$ \cite[1.3.5]{herz-95}. If $(p_0,p_1,p_2,p_3)$ is a
harmonic quadruple in $\PP(R)$ then $p_0$, $p_1$, and $p_i$ are
mutually distant for $i\in\{2,3\}$. Furthermore, all four points are
mutually distant if and only if $-1\neq 1$ in $R$. Otherwise $p_2=p_3$,
whence in this case harmonic mappings do not deserve our interest.
 \par
In order to state the next result we have to allow the domain and the
codomain of a harmonic mapping to be a subset of a projective line.

\begin{prop}\label{prop:dist+harm}
Let $\alpha:R\to R'$ and $\oa:C\to C''$ be given as in Theorem
\ref{thm:projektiv}. Then the following statements are true:
\begin{enumerate}
 \item
$\oa$ takes pairs of distant points to pairs of distant points.
 \item
$\oa$ is a harmonic mapping.
\end{enumerate}
\end{prop}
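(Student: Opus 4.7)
The plan is to exploit the $2$-$\dis$-transitivity of $\E_2(R)$ on $C$ established in Lemma \ref{lem:2fach} so as to move any given pair or quadruple into a standard position at $R(1,0)$ and $R(0,1)$, and then transport the conclusion back through the commutative diagram (\ref{eq:diagramm}) of Theorem \ref{thm:projektiv}(c).

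For (a) I would argue as follows. Given distant $p,q\in C$, Lemma \ref{lem:2fach} yields some $E(T)\in\E_2(R)$ with $p\cdot E(T)=R(1,0)$ and $q\cdot E(T)=R(0,1)$. Applying (\ref{eq:diagramm}) one obtains $p^\oa\cdot E(T^\alpha)=R'(1',0')$ and $q^\oa\cdot E(T^\alpha)=R'(0',1')$. Since the rows of the identity matrix represent distant points in $\PP(R')$ and right multiplication by the invertible matrix $E(T^\alpha)^{-1}$ preserves the distant relation, $p^\oa$ and $q^\oa$ are distant.

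For (b), let $(p_0,p_1,p_2,p_3)$ be a harmonic quadruple with all four points in $C$. By \ref{para:harmonic} we may assume $-1\neq 1$ in $R$, for otherwise $p_2=p_3$ and the assertion is vacuous. The points $p_0,p_1,p_2$ being mutually distant, a single matrix $E(T)\in\E_2(R)$ (produced by Lemma \ref{lem:2fach}) sends $p_0$ to $R(1,0)$ and $p_1$ to $R(0,1)$; since $p_2\cdot E(T)$ is then distant from both, it has the form $R(t,1)$ with $t\in R^*$. The definition of a harmonic quadruple over a ring forces $p_3\cdot E(T)=R(-t,1)$. Using (\ref{eq:oa-affin_t1}) and the identity $(-t)^\alpha=-t^\alpha$ (a direct consequence of the additivity of $\alpha$ and $0^\alpha=0'$), the $\oa$-images of the transformed quadruple are
\begin{equation*}
 \bigl(R'(1',0'),\;R'(0',1'),\;R'(t^\alpha,1'),\;R'(-t^\alpha,1')\bigr),
\end{equation*}
where $t^\alpha\in{R'}^*$ by Theorem \ref{thm:inv-0}(a); this is the standard harmonic quadruple in $\PP(R'')$. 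Diagram (\ref{eq:diagramm}) then transports the conclusion back: $(p_0^\oa,p_1^\oa,p_2^\oa,p_3^\oa)$ arises from a harmonic quadruple by the action of $E(T^\alpha)^{-1}\in\GL_2(R')$, and the right action of $\GL_2$ preserves harmonicity, so it is harmonic.

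The main obstacle I anticipate is justifying the step $p_3\cdot E(T)=R(-t,1)$ in the reduction. This amounts to checking, from the definition of harmonicity over a ring in \cite[1.3.5]{herz-95}, that the harmonic conjugate of $R(t,1)$ with respect to $R(1,0)$ and $R(0,1)$ is $R(-t,1)$ --- equivalently, that the involution fixing $R(1,0)$ and $R(0,1)$ while sending $R(t,1)\mapsto R(-t,1)$ realizes the harmonic conjugation. Every remaining ingredient --- preservation of units under $\alpha$, invariance of harmonic quadruples under $\GL_2(R')$, and compatibility of $\oa$ with the group actions --- is already available from Theorems \ref{thm:inv-0} and \ref{thm:projektiv}.
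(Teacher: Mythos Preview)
Your proposal is correct and follows essentially the same route as the paper: both parts reduce via Lemma~\ref{lem:2fach} and the commutative diagram~(\ref{eq:diagramm}) to the standard pair $R(1,0),\,R(0,1)$ and the standard harmonic quadruple $\big(R(1,0),R(0,1),R(u,1),R(-u,1)\big)$ with $u\in R^*$, and then use~(\ref{eq:oa-affin_t1}) together with $u^\alpha\in{R'}^*$. The only cosmetic difference is that the paper records the characterization of harmonic quadruples at $R(1,0),R(0,1)$ as a stated fact rather than flagging it as an obstacle, and it cites $u^\alpha\in{R'}^*$ directly from the remarks in~\ref{para:jordan} rather than from Theorem~\ref{thm:inv-0}.
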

\begin{proof}
(a) Let $(p,q)$ be a pair of distant points in $C$. By Lemma
\ref{lem:2fach}, there exists a matrix in $\E_2(R)$ sending $p$ to
$R(1,0)$ and $q$ to $R(0,1)$. Clearly, the $\oa$-images of $R(1,0)$
and $R(0,1)$ are distant and, by (\ref{eq:diagramm}), the points
$p^\oa$ and $q^\oa$ are distant, too.
\par
(b) Suppose that $p_1,p_2$ are points of $\PP(R)$. Then
$\big(R(1,0),R(0,1),p_1,p_2\big)$ is a harmonic quadruple if and only if
there is a $u\in R^*$ with $p_1=R(u,1)$ and $p_2=R(-u,1)$.
\par
If we are given a harmonic quadruple in $C$ then, by Lemma
\ref{lem:2fach} and (\ref{eq:diagramm}), we may assume without loss
of generality that the first two points are $R(1,0)$ and $R(0,1)$. So
the remaining two points can be described as above. We deduce from
(\ref{eq:oa-affin_t1}) and $u^\alpha\in{R'}^*$ that under $\oa$ a
harmonic quadruple is obtained.
\end{proof}

The first part of Proposition \ref{prop:dist+harm} implies that
\begin{equation}\label{eq:kontrahierend}
  \dist(p^\oa,q^\oa)\leq \dist(p,q) \mbox{ for all } p,q\in C.
\end{equation}
Here $\dist(p,q)$ denotes the distance of two points in the distant
graph.
 \para
We turn to following question: If $C\neq\PP(R)$, how should one extend
$\oa$ to a mapping $\gamma:\PP(R)\to\PP(R')$? In view of Proposition
\ref{prop:dist+harm} such an extension should at least preserve
distant pairs and harmonic quadruples. Here are solutions to this
problem.
\begin{exas}\label{exa:fortsetzung}
\begin{enumerate}
\item
For each connected component $C_\mu$ of $\PP(R)$ other than $C$
choose a matrix $A_\mu\in\GL_2(R)$ such that the first row of $A_\mu$
represents a point of $C_\mu$. Also select a matrix
$A'_\mu\in\GL_2(R')$. Then $A_\mu^{-1}$ maps $C_\mu$ onto $C$, $\oa$
takes $C$ into $C'$, and $A'_\mu$ maps $C'$ into some connected
component of $\PP(R')$. In this way we obtain a solution $\gamma$ by
pasting together all these mappings.
\item
In Example (a) the matrices $A'_\mu$ can be chosen at random. Suppose
now that there is a homomorphism $\sigma:\GL_2(R)\to\GL_2(R')$ such
that $\sigma$ restricts to $\alpha_\E$ and such that $\sigma$ takes
the stabilizer of $R(1,0)$ into the stabilizer of $R'(1',0')$. Then
\begin{equation}\label{eq:sigma}
  \overline\sigma : \PP(R)\to\PP(R') :
  R(1,0)\cdot M\mapsto R'(1',0')\cdot M^\sigma\;\;
  \mbox{with } M\in\GL_2(R)
\end{equation}
is a well defined extension of $\oa$ and $(\sigma,\overline\sigma)$
is a homomorphism of group actions. The mapping $\overline\sigma$
fits into Example (a) by choosing $A'_\mu=A_\mu^\sigma$.
\par
The homomorphisms $\alpha_*$, $\alpha_{**}$, and $\beta$ which have
been introduced in \ref{exa:jordan} (a), (b), and (c), respectively,
satisfy the conditions  above: A matrix $M\in\GL_2(R)$ stabilizes
$R(1,0)$ if and only if there are elements $a,d\in R^*$ and $c\in R$ with
$M=\SMat2{a & 0\\ c & d}$. If $\alpha$ is a homomorphism of rings
then the assertion is immediate. Furthermore, in this case we get the
well known mapping
\begin{equation}\label{eq:homo}
  \overline{\alpha_*}:\PP(R)\to\PP(R') : R(a,b)\mapsto R'(a^\alpha,b^\alpha).
\end{equation}
If $\alpha$ is an antihomomorphism then a straightforward calculation
shows
\begin{equation*}
 M^{\alpha_{**}}
 =
 \Mat2{(d^\alpha)^{-1}                        & 0'   \\
       (a^\alpha)^{-1}c^\alpha(d^\alpha)^{-1} & (a^\alpha)^{-1}},
\end{equation*}
whence the assertion follows also in the remaining cases, cf.\ also
\cite[Remark 5.4]{blu+h-00y}.
\end{enumerate}
\end{exas}
We end this section with an example where $\PP(R)$ has more than one
connected component, the connected components of $\PP(R)$ have
infinite diameter, and $\alpha$ is a proper Jordan endomorphism.

\begin{exa}\label{exa:polynomring}
We specify and slightly modify the data of Example
\ref{exa:jordan}~(e) as follows: Let $D=K$ be a commutative field,
let $B=K[x,y]$ be the algebra of polynomials in commuting
indeterminates $x,y$ over $K$, and let $\chi:K[x,y]\to K: f\mapsto
f(0,0)$. The module $M=K^3$ and its multiplication remain unchanged.
Hence $R=K[x,y]\oplus K^3$. Again $\alpha_1=\id_{K[x,y]}$, but now
$\alpha_2$ is chosen to be any $K$-linear mapping with $\eps_3$ not being
an eigenvector. Then $\alpha$ is a proper $K$-linear Jordan
endomorphism, since $(\eps_1\eps_2)^\alpha=\eps_3^\alpha\notin
K\eps_3$, whereas $\eps_1^\alpha\eps_2^\alpha\in K\eps_3$ and
$\eps_2^\alpha\eps_1^\alpha\in K\eps_3$.
\par
The projection $\pi:R\to K[x,y]$ is an epimorphism of $K$-algebras
with kernel $\{0\}\oplus K^3$. By (\ref{eq:homo}), it gives rise to a
mapping $\overline{\pi_*}:\PP(R)\to \PP(K[x,y])$ which is surjective
\cite[Proposition 3.5~(3)]{blu+h-00b}. Under $\overline{\pi_*}$ the
connected component of $R(1,0)$ is mapped onto the connected
component of $K[x,y](1,0)$, since $\pi_\PE$ is a surjection of
$\PE_2(R)$ onto $\PE_2(K[x,y])$. The projective line $\PP(K[x,y])$
has more than one connected component and all its connected
components have infinite diameter \cite[Example 5.7~(c)]{blu+h-01a}.
So (\ref{eq:kontrahierend}) implies that also $\PP(R)$ has more than
one connected component and that $C\subset\PP(R)$ has infinite
diameter. By \cite[Theorem 3.2~(a)]{blu+h-01a}, then all connected
components of $\PP(R)$ have infinite diameter.
\end{exa}

\section{On homomorphisms of chain geometries}\label{se:homomorph}
 \para\label{para:kettengeom}
If $K\subseteq R$ is a (not necessarily commutative) subfield, then
the projective line $\PP(K)$ can be identified with a subset of
$\PP(R)$; cf.\ \ref{para:unter}. The orbit of $\PP(K)$ under the
group $\GL_2(R)$ is the set of \emph{$K$-chains}. It turns $\PP(R)$
into a {\em chain geometry\/} $\Sigma(K,R)$. The following basic
properties of chain geometries can be found in \cite{blu+h-00a}: Any
three mutually distant points are on at least one $K$-chain. Two
distinct points are distant if and only if they are on a common
$K$-chain. Therefore each $K$-chain is contained in a unique
connected component. In contrast to \cite{herz-95} it is not assumed
that $K$ is in the centre of $R$, whence in \cite{blu+h-00a} we used
the term \emph{generalized chain geometry} for what is here called a
chain geometry.
\par
We now consider two chain geometries $\Sigma(K,R)$, $\Sigma(K',R')$
and the mapping (\ref{def:oa}). The following result is a
generalization of \cite[Theorem 2.4]{bart-89} and
\cite[9.1]{herz-95}:

\begin{thm}\label{thm:ketten}
Let $\alpha:R\to R'$ be a Jordan homomorphism. The mapping $\oa:C\to
C''$ maps $K$-chains into $K'$-chains if and only if for each $c\in
R^*$ there is a $u_c'\in {R'}^*$ such that
\begin{equation}\label{eq:ketten}
  (Kc)^\alpha \subseteq ({u_c'}^{-1}K'u_c') c^\alpha.
\end{equation}
\end{thm}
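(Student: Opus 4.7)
For necessity, I would apply $\oa$ to the specific $K$-chain $\kappa_c:=\PP(K)\cdot\diag(c,1)\subseteq C$ for an arbitrary $c\in R^*$. By (\ref{eq:oa-affin_t1}), its image is $\{R'(1',0')\}\cup\{R'((kc)^\alpha,1'):k\in K\}$, which by hypothesis lies in some $K'$-chain $\lambda'$ through $R'(1',0')$ and $R'(0',1')$. Writing $\lambda'=\PP(K')\cdot\diag(A,D)$ with $A,D\in{R'}^*$, the slope set of $\lambda'$ is $D^{-1}K'A=(D^{-1}K'D)(D^{-1}A)$, a conjugate of $K'$ translated by a unit. Since $c^\alpha=(1\cdot c)^\alpha$ lies in this slope set, one finds $k_0'\in(D^{-1}K'D)^*$ with $k_0'(D^{-1}A)=c^\alpha$; absorbing $k_0'$ into the conjugate rewrites the slope set as $(D^{-1}K'D)c^\alpha$, and setting $u_c':=D$ yields (\ref{eq:ketten}).

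For sufficiency, assume (\ref{eq:ketten}) holds for every $c\in R^*$ and let $\kappa\subseteq C$ be an arbitrary $K$-chain. Two distinct points of $\kappa$ are distant by the observations in \ref{para:kettengeom}, and Lemma \ref{lem:2fach} supplies a $g\in\E_2(R)$ moving them to $R(1,0)$ and $R(0,1)$. Since $\oa$ is $\alpha_\PE$-equivariant by Theorem \ref{thm:projektiv} and since $\GL_2(R'')\subseteq\GL_2(R')$ permutes $K'$-chains, it suffices to treat $\kappa':=\kappa\cdot g$. Analysing the stabilizer of $\{R(1,0),R(0,1)\}$ in $\GL_2(R)$ shows that $\kappa'=\PP(K)\cdot\diag(a,d)$ for some $a,d\in R^*$ (the anti-diagonal ``swap'' option produces the same point set). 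By (\ref{eq:oa-affin_t1}) applied to $R(ka,d)=R(d^{-1}ka,1)$, the slope set of ${\kappa'}^{\oa}$ is $(d^{-1}Ka)^\alpha$.

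The decisive step is to process $(d^{-1}ka)^\alpha$ via the Jordan identity. From $d\cdot(d^{-1}ka)\cdot d=kad$ and $(axa)^\alpha=a^\alpha x^\alpha a^\alpha$, one gets $(d^{-1}ka)^\alpha=(d^\alpha)^{-1}(kad)^\alpha(d^\alpha)^{-1}$, hence $(d^{-1}Ka)^\alpha=(d^\alpha)^{-1}(Kad)^\alpha(d^\alpha)^{-1}$. Applying (\ref{eq:ketten}) with $c:=ad\in R^*$ bounds $(Kad)^\alpha\subseteq({u_{ad}'}^{-1}K'u_{ad}')(ad)^\alpha$, and a rearrangement then yields
\[
  (d^{-1}Ka)^\alpha\ \subseteq\ (u_{ad}'d^\alpha)^{-1}\,K'\,\bigl(u_{ad}'(ad)^\alpha(d^\alpha)^{-1}\bigr),
\]
which is exactly the slope set of a $K'$-chain through $R'(1',0')$ and $R'(0',1')$. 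Thus ${\kappa'}^{\oa}$, and hence $\kappa^\oa$ after undoing $g$, is contained in a $K'$-chain.

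I expect the principal obstacle to be the sufficiency direction: because a Jordan homomorphism only respects ``sandwiched'' products $axa$, there is no direct handle on $(d^{-1}ka)^\alpha$, and the argument unlocks only once one notices that squeezing by $d$ on both sides converts this expression into $(kad)^\alpha$, to which the hypothesis applies with $c=ad$. Necessity is by comparison routine, reducing to the standard description of $K'$-chains through two fixed distant points.
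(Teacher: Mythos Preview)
Your proof is correct. The necessity argument is essentially identical to the paper's: both apply $\oa$ to the standard $K$-chain through $R(1,0)$, $R(0,1)$, $R(c,1)$ and read off condition~(\ref{eq:ketten}) from the description of all $K'$-chains through the three image points.

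For sufficiency the two arguments diverge in one technical step. After reducing via Lemma~\ref{lem:2fach} to a chain of the form $\{R(ka,b)\mid k\in K\}\cup\{R(1,0)\}$ (equivalently your $\PP(K)\cdot\diag(a,d)$), the paper does \emph{not} compute slopes directly. Instead it observes that $\diag(b,b^{-1})=E(-b)E(-b^{-1})E(-b)\in\E_2(R)$, so this chain and $\cD_{ab}$ lie in one $\E_2(R)$-orbit, and then invokes the commutative diagram~(\ref{eq:diagramm}) once more. Your route is to process the slope $(d^{-1}ka)^\alpha$ by the Jordan sandwich identity, rewriting it as $(d^\alpha)^{-1}(kad)^\alpha(d^\alpha)^{-1}$ and applying~(\ref{eq:ketten}) with $c=ad$. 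These are really the same idea in different dress: conjugation by $\diag(b,b^{-1})$ acts on slopes precisely by $s\mapsto bsb$, and under $\alpha_\PE$ this matrix corresponds to $\diag(b^\alpha,(b^\alpha)^{-1})$, which encodes exactly your Jordan computation. The paper's formulation is a bit slicker since it avoids unpacking the slope algebra and stays entirely within the equivariance framework already established; your version has the merit of making the role of the Jordan identity explicit and is self-contained without appealing to the factorization of $\diag(b,b^{-1})$ in $\E_2(R)$.
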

\begin{proof}
For each $c\in R^*$ the point set
\begin{equation}\label{def:D_c}
  \cD_c:=\{R(k c,1)\mid k\in K\}\cup\{R(1,0)\}
\end{equation}
is a $K$-chain through $R(1,0)$, $R(0,1)$, and $R(c,1)$. The
$K'$-chains passing through $R'(1',0')$, $R'(0',1')$, and
$R'(c^\alpha,1')$ are exactly the sets
\begin{equation}\label{eq:alle_ketten}
    \{R\big(({u'}^{-1}k' u') c^\alpha ,1'\big)
 \mid k'\in K'\}\cup\{R'(1',0')\}
\end{equation}
where $u'$ ranges in ${R'}^*$.
\par
Suppose that $\oa$ maps $K$-chains into $K'$-chains. So for each
$c\in R^*$ the point set $\cD_c^\oa$ is a subset of a $K'$-chain. Now
(\ref{eq:oa-affin_t1}) implies that this chain contains the points
$R'(1',0')$, $R'(0',1')$, and $R'(c^\alpha,1')$, whence it can be
written in the form (\ref{eq:alle_ketten}) for some $u'_c\in {R'}^*$
depending on $c$. Applying (\ref{eq:oa-affin_t1}) to each point of
(\ref{def:D_c}) shows that condition (\ref{eq:ketten}) is satisfied.
\par
Conversely, (\ref{eq:ketten}) forces that each $K$-chain $\cD_c$
given by (\ref{def:D_c}) is mapped into one of the $K'$-chains given
by (\ref{eq:alle_ketten}). By Lemma \ref{lem:2fach}, every $K$-chain
$\cD\subseteq C$ is $\E_2(R)$-equivalent to some $K$-chain through
$R(1,0)$ and $R(0,1)$. Such a chain has the form
\begin{equation*}
  \{R(ka,b)\mid k\in K\}\cup\{R(1,0)\} \mbox { with } a,b\in R^*.
\end{equation*}
Since $\diag(b,b^{-1}) = E(-b)E(-b^{-1})E(-b) \in \E_2(R)$, the
chains $\cD$ and $\cD_c$, where $c:=ab$, are in one orbit of
$\E_2(R)$. Now (\ref{eq:diagramm}) shows that also $\cD^\oa$ is a
subset of a $K'$-chain. 
\end{proof}

Condition (\ref{eq:ketten}) reduces to $(Kc)^\alpha\subseteq
K'c^\alpha$ provided that $K'$ is invariant under all inner
automorphisms of $R'$. This is the case whenever $K'$ is in the
centre of $R'$, but there are also other possibilities \cite[Examples
2.5]{blu+h-00a}.
 \para\label{para:homo}
We close with some remarks on a mapping $\oa$ where $\alpha$
satisfies the conditions of Theorem \ref{thm:ketten}. If $\PP(R)=C$
then $\oa$ is a \emph{homomorphism of chain geometries}, i.e.,
$K$-chains are mapped into $K'$-chains. If $\PP(R)\neq C$ then $\oa$
can be extended to a mapping $\gamma:\PP(R)\to\PP(R')$ according to
Example \ref{exa:fortsetzung}~(a). As the general linear group
preserves chains, any such $\gamma$ is a homomorphism of chain
geometries. Explicit examples for this latter case arise from Example
\ref{exa:polynomring}, because all Jordan endomorphisms described
there are $K$-linear and thus fulfil condition (\ref{eq:ketten}).


\begin{thebibliography}{10}
\bibitem{bart-89}
Bartolone C (1989)
\newblock Jordan homomorphisms, chain geometries and the fundamental theorem.
\newblock {Abh Math Sem Univ Hamburg} \textbf{59}: 93--99

\bibitem{bart+b-85}
Bartolone C, Bartolozzi F (1985)
\newblock Topics in geometric algebra over rings.
\newblock In: Kaya R, Plaumann P, Strambach K (eds) {Rings and
  Geometry}, pp 353--389. Dordrecht: Reidel

\bibitem{bart+f-79}
Bartolone C, Di Franco F (1979)
\newblock A remark on the projectivities of the projective line over a
  commutative ring.
\newblock {Math Z} \textbf{169}: 23--29

\bibitem{benz+s+s-81}
Benz W, Samaga H-J,  Schaeffer H (1981)
\newblock Cross ratios and a unifying treatment of von {S}taudt's notion of
  reeller {Z}ug.
\newblock In: Plaumann P, Strambach K (eds) {Geometry -- von
  Staudt's Point of View}, pp 127--150. Dordrecht: Reidel

\bibitem{blunck-92b}
Blunck A (1992)
\newblock Chain geometries over local alternative algebras.
\newblock {J Geom} \textbf{44}: 33--44

\bibitem{blunck-94}
Blunck A (1994)
\newblock Chain spaces over {J}ordan systems.
\newblock {Abh Math Sem Univ Hamburg} \textbf{64}: 33--49

\bibitem{blu+h-00a}
Blunck A,  Havlicek H (2000)
\newblock Extending the concept of chain geometry.
\newblock {Geom Dedicata} \textbf{83}: 119--130

\bibitem{blu+h-00b}
Blunck A,  Havlicek H (2000)
\newblock Projective representations {I.} {P}rojective lines over
rings.
\newblock {Abh Math Sem Univ Hamburg} \textbf{70}: 287--299

\bibitem{blu+h-01a}
Blunck A,  Havlicek H (2001)
\newblock The connected components of the projective line over a ring.
\newblock {Adv Geom} \textbf{1}: 107--117

\bibitem{blu+h-00y}
Blunck A,  Havlicek H (2001)
\newblock The dual of a chain geometry.
\newblock {J Geom} \textbf{72}: 27--36

\bibitem{cirl+e-90}
Cirlincione L, Enea MR (1990)
\newblock Una generalizzazione del birapporto sopra un anello.
\newblock {Rend Circ Mat Palermo (II)} \textbf{34}: 271--280

\bibitem{cohn-66}
Cohn PM (1966)
\newblock On the structure of the {${\rm GL}_2$} of a ring.
\newblock {Inst Hautes Etudes Sci Publ Math} \textbf{30}: 365--413

\bibitem{ferr-81}
Ferrar JC (1981)
\newblock Cross-ratios in projective and affine planes.
\newblock In: Plaumann P, Strambach K (eds) {Geometry -- von
  Staudt's Point of View}, pp 101--125. Dordrecht: Reidel

\bibitem{herz-87a}
Herzer A (1987)
\newblock On isomorphisms of chain geometries.
\newblock {Note Mat} \textbf{8}: 251--270

\bibitem{herz-95}
Herzer A (1995)
\newblock Chain geometries.
\newblock In: Buekenhout F (ed) {Handbook of Incidence Geometry},
  pp 781--842.  Amsterdam: Elsevier

\bibitem{hua-53}
Hua LK (1953)
\newblock On semi-homomorphisms of rings and their application in projective
  geometry ({R}ussian).
\newblock {Uspehi Matem Nauk (NS)} \textbf{8}: 143--148

\bibitem{jac-68}
Jacobson N (1968)
\newblock {Structure and Representation of Jordan Algebras}.
\newblock Providence: Amer Math Soc

\bibitem{kar+k-88}
Karzel H, Kroll H-J (1988)
\newblock {Geschichte der Geometrie seit Hilbert}.
\newblock Darmstadt: Wiss Buchges

\bibitem{lam-91}
Lam TY (1991)
\newblock {A First Course in Noncommutative Rings}.
\newblock New York: Springer

\bibitem{lima+l-77b}
Limaye BV, Limaye NB (1977)
\newblock The fundamental theorem for the projective line over commutative
  rings.
\newblock {Aequationes Math} \textbf{16}: 275--281

\bibitem{lima+l-77a}
Limaye BV, Limaye NB (1977)
\newblock Fundamental theorem for the projective line over non-commutative
  local rings.
\newblock {Arch Math (Basel)} \textbf{28}: 102--109

\end{thebibliography}

Authors' addresses:
 \\
Andrea Blunck, Fachbereich Mathematik, Universit\"at Hamburg,
Bundesstra{\ss}e 55, D--20146 Hamburg, Germany
 \\
email: andrea.blunck@math.uni-hamburg.de
 \\
Hans Havlicek, Institut f\"ur Geometrie, Technische Universit\"at,
Wiedner Hauptstra{\ss}e 8--10, A--1040 Wien, Austria
 \\
email: havlicek@geometrie.tuwien.ac.at
\end{document}